\newcommand{\mm}{\mathfrak m}
\newcommand{\C}{\mathbb{C}}
\newcommand{\N}{\mathbb{N}}
\newcommand{\Q}{\mathbb{Q}}
\newcommand{\R}{\mathbb{R}}
\newcommand{\Z}{\mathbb{Z}}
\newcommand{\Ac}{\mathcal{A}}
 \DeclareMathOperator{\cx}{cx}
 \DeclareMathOperator{\gin}{gin}
 \DeclareMathOperator{\ini}{in}
\DeclareMathOperator{\Ker}{Ker} 
 \DeclareMathOperator{\ch}{char}
 \DeclareMathOperator{\rank}{rank}
\DeclareMathOperator{\reg}{reg}
 \DeclareMathOperator{\supp}{supp}
\DeclareMathOperator{\spa}{span} \DeclareMathOperator{\Tor}{Tor}
\DeclareMathOperator{\pa}{\partial}
\DeclareMathOperator{\Dirsum}{\bigoplus}
\DeclareMathOperator{\pnt}{\raise 0.5mm \hbox{\large\bf.}}
\newcommand{\fall}{\mbox{for all} ~}
\newtheorem{thm}{\bf Theorem}[section]
\newtheorem{lem}[thm]{\bf Lemma}
\newtheorem{cor}[thm]{\bf Corollary}
\newtheorem{prop}[thm]{\bf Proposition}
\newtheorem{conj}[thm]{\bf Conjecture}
\newtheorem{que}[thm]{\bf Question}
\theoremstyle{definition}
\newtheorem{exa}[thm]{\bf Example}
\title{Note on resonance varieties}
\author{Phong Dinh Thieu}
\address{Universit\"at Osnabr\"uck, Institut f\"ur Mathematik, 49069 Osnabr\"uck, Germany}
\email{pthieudi@uos.de}
\begin{document}

\begin{abstract}
We study the irreducibility of resonance varieties of graded rings over an exterior algebra $E$ with particular attention to Orlik-Solomon algebras. We  prove that for a stable monomial ideal in $E$ the first resonance variety is irreducible. If $J$ is an Orlik-Solomon ideal of an essential central hyperplane arrangement, then we show that its first resonance variety is irreducible if and only if the subideal of $J$ generated by all degree 2 elements has a $2$-linear resolution. As an application we characterize those hyperplane arrangements of rank $\leq 3$ where $J$ is componentwise linear. Higher resonance varieties are also considered.  We prove results supporting a conjecture of Schenck-Suciu relating the Betti numbers of the linear strand of $J$ and its first resonance variety. A counter example is constructed that this conjecture is not true for arbitrary graded ideals.
\end{abstract}
\maketitle
%------------------------------------------------------------------------
%
%
%
%------------------------------------------------------------------------
\section{Introduction}
\label{intro}

Let $\Ac=\{H_1,\dots,H_n\}$ be an essential central
affine hyperplane arrangement in $\C^l$ with the complement $X(\Ac)=\C^l \setminus
\bigcup_{H\in \Ac} H$. Let $E=K\langle e_1,\dots,e_n \rangle$ be the exterior algebra over a field $K$ with $\ch K=0$. In the last decades, many properties of hyperplane arrangement have been studied using the so-called the \emph{Orlik-Solomon algebra} of $\Ac$, that is the quotient ring
$E/J$ where $J$ is the \emph{Orlik-Solomon ideal} of $\Ac$ generated
by all elements
\begin{equation}
\label{generators}
\partial{e_F}
= \sum_{j=1}^t (-1)^{j-1}  e_{i_1}\wedge \dots\wedge
\widehat{e_{i_j}} \wedge\dots\wedge e_{i_t} \text{ for }
F=\{i_1,\dots,i_t\}\subseteq [n]=\{1,\dots,n\}.
\end{equation}
where $\{H_{i_1},\dots,H_{i_t}\}$ is a dependent set of hyperplanes, i.e. choosing linear forms $\alpha_i \in (\C^l)^*$ such that $\Ker \alpha_i=H_i$, then $\alpha_{i_1},\dots,\alpha_{i_t}$ are linearly dependent. Here $e_F$ is the monomial $e_{i_1}\wedge \dots \wedge e_{i_t}$ in $E$. Orlik and Solomon \cite{OS} showed that the cohomology ring of $X(\Ac)$ is entirely determined by $L(\Ac)= \{\bigcap_{H\in \Ac'} H | \Ac'\subseteq \Ac\}$, the
intersection lattice of $\Ac$. More precisely, the singular
cohomology $H^{\pnt}(X(\Ac);K)$ of $X(\Ac)$ with coefficients in $K$ is
isomorphic its Orlik-Solomon algebra. See Orlik-Terao
\cite{ORTE} and Yuzvinsky \cite{Yu} for details. See also, e.g.,
\cite{AAH, DeYu, EPY, GT, NRV, SS1, SS2} for the study of Orlik-Solomon algebras
via exterior algebra methods and algebraic properties of arbitrary modules over $E$.

Falk \cite{F} defined \emph{resonance varieties} to study the ring structure of Orlik-Solomon algebras which have shown to be useful in the recent years. For a graded
algebra $A=E/J$ where $J$ is a graded ideal of $E$ and $u\in A_1$, we have a cochain complex
$$
(A,u): \qquad 0\longrightarrow A_{0}\stackrel{\cdot u }{\longrightarrow}
A_{1}\stackrel{\cdot u}{\longrightarrow}\ldots \stackrel{\cdot u }{\longrightarrow}A_{r}\stackrel{\cdot
u}\longrightarrow \ldots
$$
since $u^2=0$. Its cohomology is denoted by
$H^{\pnt}(A,u)$. The $p$-th resonance variety of $A$ is
$$
R^p(A)=\{u\in A_1: H^p(A,u)\neq 0\} \text{ for } p\geq 0.
$$
It is known that $R^p(A)$ is an affine variety in
$A_1\cong K^{\dim_K A_1}$. See \cite{LY, SCDFSTY} for more details.

Let $M$ be a finitely generated graded left and right $E$-module satisfying the equations $u m=(-1)^{\deg u\deg m}mu$ for homogeneous elements $u\in E$, $m\in M$. The graded Betti numbers of $M$ are $\beta^E_{i,j}(M)=\dim_K\Tor^E_i(K,M)_{j}$. We say that $M$ has a \emph{$d$-linear resolution} if $\beta^E_{i,i+j}(M)=0$ for all $i$ and $j\neq d$. Following \cite{HH} we call $M$ \emph{componentwise linear} if all submodules $M_{\langle i\rangle}$ of $M$ generated by $M_i$ have an $i$-linear resolution for $i\in \Z$. Let $d$ be the initial degree of $M$, i.e. $M_i = 0$ for $i < d$ and $M_d \neq 0$.  We have $\beta^E_{i,i+j}(M)=0$ for $j<d$. The numbers $\beta^E_{i,i+d}(M)$ describe the \emph{linear strand} of the minimal graded free resolution of $M$,
i.e., they count the number of linear syzygies appearing in the
resolution.

For an Orlik-Solomon algebra $A=E/J$  of an essential central hyperplane arrangements, some results on resonance varieties are known with special attention to $R^1(A)$. For example Libgober and Yuzvinsky \cite{LY} proved that two irreducible components of $R^1(A)$ meet only at 0. Falk \cite{F} showed  that if $u,v$ belong to the same irreducible component of $R^1(A)$ then $u\wedge v\in J$.  If these two properties hold for the first resonance variety of a graded algebra $A=E/J$, we say that $A$ satisfies \emph{property (*)}.

At first,  we investigate in Section \ref{sect irrestab} the irreducibility of resonance varieties of graded algebras $E/J$.  In the case $J$ is a stable monomial ideal, we show that the first resonance variety of $E/J$ is irreducible.  This can be seen as the generic case since the generic initial ideal of any graded ideal is stable. We also prove some properties of higher degree resonance varieties and suggest a question for the irreducibility of higher degree resonance varieties. If  $J$ is an Orlik-Solomon ideal, we prove in Section \ref{sect irreOS} that the degree 2 component ideal $J_{\langle 2\rangle}$ of $J$ has a 2-linear resolution if and only if the first resonance
variety of $E/J$ is irreducible. In particular, if $J$ is componentwise linear, then the first resonance is
irreducible. The converse holds for arrangements whose rank
less than or equal to 3.

Schenck and Suciu suggested in \cite{SS2} a conjecture about the Betti numbers of the linear strand of an Orlik-Solomon algebra $A=E/J$. More precisely, the Betti numbers $\beta^E_{i,i+1}(E/J)$ should be determined by using invariants from $R^1(A)$: Observe that Orlik-Solomon ideals are generated by products of linear forms since
\begin{equation}
\label{generators2}
\partial{e_F}
= (e_{i_2}-e_{i_1})\wedge \ldots \wedge(e_{i_t}-e_{i_1}) \text{ for }
F=\{i_1,\dots,i_t\}\subseteq [n].
\end{equation}
Following \cite{DeYu} we call ideals generated by products of linear forms \textit{pure ideals}.  Note that monomial ideals are pure ideals, but not all monomial ideals satisfy property (*); see Example \ref{exconB}. A direct generalization of Conjecture B in \cite{SS2} is:
\begin{conj}
\label{conB}
Let $J\subset E$ be a pure ideal such that $E/J$ satisfies property (*). Then for $i\gg 0$, the graded Betti numbers of the linear strand of $E/J$ are given by
$$
\beta^E_{i,i+1}(E/J)= i\sum_{r\geq 1} h_r \binom{r+i-1}{i+1},
$$
where $h_r$ is the number of $r$-dimensional components of $R^1(E/J)$ in the affine space $K^n$.
\end{conj}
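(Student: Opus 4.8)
The plan is to route everything through the BGG correspondence, reducing the linear strand of $E/J$ to the Hilbert polynomial of a coherent sheaf supported on $R^1(E/J)$, and then to use property (*) to split that sheaf along the irreducible components of $R^1$, which under (*) are automatically linear subspaces. \textbf{Step 1 (linear strand as a Hilbert polynomial).} Let $S=K[x_1,\dots,x_n]$ be the symmetric algebra on the dual of $A_1$. Under BGG the linear strand of the minimal $E$-free resolution of $A=E/J$ becomes a linear complex of free $S$-modules $\cdots\to S(-i)^{\beta^E_{i,i+1}(A)}\to\cdots\to S$, whose single nonvanishing cohomology module $N=N(A)$ is finitely generated and graded. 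Sheafifying, one obtains a coherent sheaf $\mathcal N$ on $\PP^{n-1}=\PP(A_1)$ with $\beta^E_{i,i+1}(A)=\dim_K H^0(\PP^{n-1},\mathcal N(i))$ for $i>\reg N$; thus the linear-strand Betti numbers eventually agree with the Hilbert polynomial $P_{\mathcal N}(i)$, and the conjecture is equivalent to the equality $P_{\mathcal N}(i)=i\sum_{r\ge1}h_r\binom{r+i-1}{i+1}$.

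\textbf{Step 2 (support equals resonance).} Localizing the BGG complex at the homogeneous prime of $S$ corresponding to a point $u\in A_1$ computes the cohomology of the complex $(A,u)$ in the linear range, so $\mathcal N$ is supported exactly on the projectivized cone $\PP(R^1(A))$ (recall $R^1$ is a cone, being invariant under nonzero scaling of $u$). Hence $\dim\mathcal N=\max\{r:h_r\ne0\}-1$, matching the degree of the right-hand side, and the problem becomes to pin down all coefficients of $P_{\mathcal N}$ from the geometry of $R^1(A)$.

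\textbf{Step 3 (splitting via property (*)).} Property (*) enters twice. From the Falk-type condition $u\wedge v\in J$ for $u,v$ in a common component $V$, bilinearity gives $W\wedge W\subseteq J$ for the linear span $W$ of $V$; then for every $w\in W$ the map $\cdot w:A_1\to A_2$ annihilates all of $W$, so if $\dim W\ge 2$ then $H^1(A,w)\ne0$, i.e. $W\subseteq R^1(A)$, and maximality of the component forces $V=W$ — so every component $V_k$ is a linear subspace, of dimension $r_k$, and a $1$-dimensional component contributes $0$ anyway. From the Libgober--Yuzvinsky-type condition the $V_k$ meet pairwise only at $0$, so the $\PP(V_k)\cong\PP^{r_k-1}$ are pairwise disjoint in $\PP^{n-1}$. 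Consequently $\mathcal N$ decomposes (after passing to its saturation, which does not change $P_{\mathcal N}$) as $\bigoplus_k\mathcal N_k$ with $\mathcal N_k$ supported on $\PP^{r_k-1}$, whence $P_{\mathcal N}(i)=\sum_k P_{\mathcal N_k}(i)$.

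\textbf{Step 4 (the local contribution, and the main obstacle).} It remains to show that a component of dimension $r$ contributes exactly $i\binom{r+i-1}{i+1}$, i.e. that $\mathcal N_k$ on $\PP^{r_k-1}$ has this Hilbert polynomial. The idea is to compare $\mathcal N_k$ with the linear-strand sheaf of the ``local'' subalgebra cut out by the coordinates spanning $V_k$: that local sheaf is torsion free of rank $r-1$ on $\PP^{r-1}$ (matching $\dim_K H^1$ of a pencil), and its Hilbert polynomial is the asserted $i\binom{r+i-1}{i+1}$ by a direct computation — note that its degree $r-1$ and leading term $i^{r-1}/(r-2)!$ are precisely those of a rank-$(r-1)$ sheaf on $\PP^{r-1}$. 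The serious difficulty lies in establishing this comparison: one must rule out any torsion, embedded component, or extra multiplicity in $\mathcal N$ along the $V_k$ that would perturb the lower-order terms of $P_{\mathcal N}$. For arbitrary graded $J$ this genuinely fails, so purity of $J$ must be used decisively here — presumably through the representation \eqref{generators2} of the generators as products of linear forms, which rigidly couples the linear syzygies to the combinatorics of the components — and carrying it out in full generality is exactly the open content of the conjecture; I would first settle the stable monomial case (coordinate subspaces, where the linear strand is read off from the generators) and the rank-$\le 3$ case using the structure results of Sections~\ref{sect irrestab}--\ref{sect irreOS}, leaving the general pure case as the remaining step.
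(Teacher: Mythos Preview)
The statement you are trying to prove is Conjecture~\ref{conB}, and the paper does \emph{not} prove it; it is left open. The paper only establishes two special cases: Corollary~\ref{cor:2linearconj} (Orlik--Solomon ideals with $J_{\langle 2\rangle}$ having a $2$-linear resolution, where $R^1$ is then irreducible) and Theorem~\ref{thmconB} (edge ideals of disjoint unions of complete graphs), both by direct combinatorial Betti-number computations rather than by any sheaf-theoretic machinery. So there is no ``paper's own proof'' to compare against.

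Your proposal is not a proof either, and you say so yourself: Step~4 identifies the genuine obstruction --- controlling possible torsion, embedded components, or excess multiplicity of $\mathcal N$ along each $\PP(V_k)$ --- and then concedes that ``carrying it out in full generality is exactly the open content of the conjecture.'' Steps~1--3 are correct and are essentially the framework of Schenck--Suciu \cite{SS2}: BGG turns the linear strand into a sheaf on $\PP^{n-1}$ supported on $\PP(R^1)$, and property~(*) makes the projectivized components disjoint linear spaces so that the Hilbert polynomial is additive over them. But the entire force of the conjecture lies in showing that each summand $\mathcal N_k$ has exactly the Hilbert polynomial $i\binom{r_k+i-1}{i+1}$, and your appeal to a ``local subalgebra'' comparison is only a heuristic: you give no mechanism by which purity of $J$ rules out the lower-order perturbations you flag. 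In short, Steps~1--3 reduce the conjecture to the known hard core, and Step~4 restates that core without resolving it.
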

In Example \ref{exconB}, we show that property (*) can not be omitted. More precisely, Conjecture \ref{conB} does not hold for arbitrary pure ideals.  We prove in Theorem \ref{thmconB} that there is a class of algebras induced by certain graphs in which the conjecture hold.

We are grateful to Tim R\"omer for generously suggesting problems, many insightful
ideas on the subject of this note.
%---------------------------------------------------------
\section{Preliminaries}
\label{sect Prelimi}

In this section we recall some definitions and facts about the exterior algebra and hyperplane arrangement. Let $M$ always be a finitely generated graded left and right $E$-module satisfying the equations $u m=(-1)^{\deg u\deg m}mu$ for homogeneous elements $u\in E$, $m\in M$. Its minimal graded free resolution is an exact sequence
$$
\ldots\longrightarrow \Dirsum_{j\in\Z}E(-j)^{\beta^E_{1,j}(M)} \longrightarrow
\Dirsum_{j\in\Z}E(-j)^{\beta^E_{0,j}(M)}\longrightarrow M\longrightarrow 0.
$$
We see that the resolution is $d$-linear (as defined in Section \ref{intro}) for
some $d \in \Z$ if and only if it is of the form
$$
\ldots\longrightarrow E(-d-2)^{\beta^E_{2,d+2}(M)}\longrightarrow
E(-d-1)^{\beta^E_{1,d+1}(M)} \longrightarrow
E(-d)^{\beta^E_{0,d}(M)}\longrightarrow M \longrightarrow 0.
$$

The \emph{regularity} of $M$ is defined as
$\reg M= \max\{j-i : \beta^E_{i,j}(M)\neq 0\}$. The \emph{complexity} of $M$,
which measures the growth rate of the Betti numbers of $M$, is
defined as
$$
\cx  M=\inf\{c \in \N: \beta^E_i(M)\leq \alpha i^{c-1} \fall i\geq 1,
\alpha \in\R\}.
$$
Recall that a componentwise linear module which is generated in one degree has a linear resolution. A module that has a linear resolution is componentwise linear.

Next we present some facts about stable (strongly stable) monomial ideals and generic initial ideals.
Let $u=e_F\in E$ be a monomial where $F\subseteq [n]$. We denote $\max(u)=\max\{i: i\in F\}$. A monomial ideal $J\subseteq E$ is called \emph{stable} if $e_j\frac{u}{e_{\max(u)}}\in J$ for every monomial
$u\in J$ and $j<\max(u)$. The ideal $J$ is called \emph{strongly
stable} if $e_j\frac{u}{e_i}\in J$ for every monomial $u=e_F\in J$,
$i\in F$ and $j<i$.

For a monomial ideal $J\subset E$ let $G(J)$ be the minimal set of monomial generators of $J$, and
$G(J)_j\subseteq G(J)$ be the subset of generators in $G(J)$ of degree $j$. Aramova, Herzog and Hibi \cite{AHH} computed  a formula for the graded Betti numbers of stable ideals:
\begin{lem}\cite[Corollary 3.3]{AHH}
\label{betti}
Let $0\neq J\subset E$ be a stable monomial ideal. Then
$$
\beta^E_{i,i+j}(J)=\sum_{u\in G(J)_j}\binom{\max(u)+i-1}{\max(u)-1} \
\ \fall i\geq 0, j\in\Z.
$$
\end{lem}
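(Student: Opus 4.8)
The plan is to run an Eliahou--Kervaire-type induction over $E$. Order $G(J)=\{u_1,\dots,u_m\}$ so that $u:=u_m$ has $\max(u)$ maximal among all minimal generators; put $d=\deg u$, $k=\max(u)$, and $J'=(u_1,\dots,u_{m-1})$, so that $J'\subsetneq J$ and $G(J')=G(J)\setminus\{u\}$. First I would record two combinatorial facts about the stable ideal: \textbf{(a)} $J'$ is again stable, and \textbf{(b)} $J':u=(e_1,\dots,e_k)$. Both come from the standard ``push-down'' manipulations. For (a): if $w\in J'$ and $j<\max(w)$ then $e_j\,w/e_{\max(w)}\in J$ by stability, and since we removed a generator of \emph{maximal} $\max$-value, a monomial of $J$ of $\max$-value $<k$ is not a multiple of $u$ and hence lies in $J'$; a short case split on whether $\max(w)$ comes from the underlying generator or from its cofactor finishes it. For (b): ``$\supseteq$'' uses $e_k u=0$ together with stability (for $j<k$ the monomial $e_j\,u/e_k\in J$ has $\max$-value $<k$, so it lies in $J'$, whence $e_j u\in J'$); ``$\subseteq$'' is a support argument, using that every minimal generator of $J$ has degree at most its $\max$-value, which is at most $k$.

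Next I would feed (b) into the short exact sequence $0\to J'\to J\to J/J'\to 0$, giving $J/J'\cong\big(E/(J':u)\big)(-d)=\big(E/(e_1,\dots,e_k)\big)(-d)$. Using the $K$-algebra splitting $E\cong\Lambda(e_1,\dots,e_k)\otimes_K\Lambda(e_{k+1},\dots,e_n)$ and base change, a minimal free $E$-resolution of $E/(e_1,\dots,e_k)$ is obtained from the Cartan complex resolving $K$ over $\Lambda(e_1,\dots,e_k)$ by applying $-\otimes_K\Lambda(e_{k+1},\dots,e_n)$; it is linear, with $i$-th Betti number $\binom{k-1+i}{i}=\binom{k+i-1}{k-1}$. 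Hence $\beta^E_{i,i+d}(J/J')=\binom{\max(u)+i-1}{\max(u)-1}$ while $\beta^E_{i,j}(J/J')=0$ for $j\neq i+d$. By (a), induction on $m$ applies to $J'$: the base case $m=1$ forces $u_1=e_1\wedge\dots\wedge e_{\deg u_1}$ (otherwise stability fails) and is the same base-change computation with $J=(u_1)\cong\big(E/(e_1,\dots,e_{\deg u_1})\big)(-\deg u_1)$, so $\beta^E_{i,j}(J')$ is given by the asserted formula summed over $G(J')_j=G(J)_j\setminus\{u\}$.

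It then remains to show that the horseshoe-lemma resolution assembled from these two pieces -- with $i$-th free module $F_i(J')\oplus F_i(J/J')$ -- is a \emph{minimal} free resolution of $J$. Granting this, $\beta^E_{i,j}(J)=\beta^E_{i,j}(J')+\beta^E_{i,j}(J/J')$, and adding back the contribution $\binom{\max(u)+i-1}{\max(u)-1}$ of $u$ in bidegree $(i,i+d)$ produces exactly the claimed formula, completing the induction. This minimality is the crux and the main obstacle: by the long exact $\Tor$-sequence it is equivalent to the vanishing of every connecting map $\Tor^E_i(K,J/J')\to\Tor^E_{i-1}(K,J')$. Since $\Tor^E_i(K,J/J')$ is concentrated in internal degree $d+i$, such a map can fail to vanish only when $J'$ has a minimal generator of degree exactly $d+1$ -- a case that genuinely occurs -- so ruling it out requires the explicit Cartan-type differential of $F_\bullet(J/J')$, a lift of the inclusion $J'\hookrightarrow J$ to the resolutions, and the verification that every entry of the resulting comparison map lies in $\mm=(e_1,\dots,e_n)$, exactly as in the proof that the Eliahou--Kervaire resolution is minimal. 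That verification, which leans on identity (b), is the one genuinely non-formal step; everything else is bookkeeping.
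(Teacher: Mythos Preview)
The paper does not prove this lemma at all; it is simply quoted from \cite[Corollary~3.3]{AHH}. So there is no ``paper's own proof'' to compare against, and your plan is in fact a sketch of the argument in the original source. There Aramova--Herzog--Hibi build the exterior Eliahou--Kervaire (Cartan) resolution explicitly and check exactness and minimality by hand; your iterated mapping-cone version is an equivalent repackaging, and your facts (a) and (b) are precisely the combinatorial inputs they use.

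Your outline is correct as far as it goes, and you are right to isolate the minimality of the horseshoe resolution as the only substantive step---but you should be aware that this step \emph{is} the theorem. Without it you only get $\beta^E_{i,j}(J)\le\beta^E_{i,j}(J')+\beta^E_{i,j}(J/J')$, not equality. Your chosen ordering (peel off a generator of maximal $\max$) really does produce instances with $G(J')_{d+1}\neq\emptyset$: for example $J=(e_1e_2,e_1e_3,e_1e_4,e_1e_5,e_2e_3e_4)$ is stable, the unique generator of maximal $\max$ is $u=e_1e_5$ with $d=2$, and $G(J')_3=\{e_2e_3e_4\}$. So the connecting map $\Tor^E_i(K,J/J')_{i+2}\to\Tor^E_{i-1}(K,J')_{i+2}$ is between nonzero spaces and must be shown to vanish by an explicit computation with the Cartan differentials, exactly as you say. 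That verification is not long once one writes the maps down, but it is not formal bookkeeping either; as written, your plan stops just short of a proof. One way to lighten the load is to refine the ordering (among generators of maximal $\max$, remove one of maximal \emph{degree}); (a) and (b) survive unchanged, and while this does not eliminate the degree-$(d{+}1)$ issue entirely, it reduces the cases you must check.
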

The complexity of a stable monomial ideal $J$ can be interpreted in terms of
$G(J)$.
\begin{prop} \cite[Proposition 3.4]{GT}
\label{cx}
Let $0\neq J\subset E$ be a stable monomial ideal. Then
$$
\cx  E/J=\max\{\max(u):u\in G(J)\}.
$$
\end{prop}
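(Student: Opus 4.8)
The plan is to reduce the computation of $\cx E/J$ to that of $\cx J$ and then to read off the growth rate of the total Betti numbers $\beta^E_i(J)$ directly from Lemma \ref{betti}. From the short exact sequence $0\to J\to E\to E/J\to 0$ and the freeness of $E$ one obtains $\Tor^E_i(K,E/J)\cong\Tor^E_{i-1}(K,J)$ for $i\ge 2$, and the same relation persists in homological degree $1$ because $J\subseteq\mm$ forces the induced map $K\otimes_E J\to K\otimes_E E$ to vanish; hence $\beta^E_i(E/J)=\beta^E_{i-1}(J)$ for all $i\ge 1$. A shift of the homological index does not change polynomial growth, so $\cx E/J=\cx J$, and it suffices to show $\cx J=c$, where $c:=\max\{\max(u):u\in G(J)\}$.

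Summing the formula of Lemma \ref{betti} over all $j$ gives
$$
\beta^E_i(J)=\sum_{u\in G(J)}\binom{\max(u)+i-1}{\max(u)-1}\qquad\text{for }i\ge 0.
$$
For a fixed integer $m\ge 1$ the term $\binom{m+i-1}{m-1}=\frac{(i+1)(i+2)\cdots(i+m-1)}{(m-1)!}$ is a polynomial in $i$ of degree exactly $m-1$ with positive leading coefficient. Consequently $\beta^E_i(J)$ coincides, for all $i\ge 0$, with a polynomial $P(i)$ of degree at most $c-1$. The one point that needs care is that the coefficient of $i^{c-1}$ in $P$ is a sum of strictly positive contributions, one for each $u\in G(J)$ with $\max(u)=c$, so no cancellation occurs: $\deg P=c-1$ and the leading coefficient of $P$ is positive.

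Both inequalities in the definition of complexity then follow. On the one hand $\beta^E_i(J)=P(i)\le\alpha i^{c-1}$ for a suitable $\alpha\in\R$ and all $i\ge 1$, so $\cx J\le c$. On the other hand, since $P$ has positive leading coefficient and degree $c-1$, there is no $\alpha\in\R$ with $P(i)\le\alpha i^{c-2}$ for all $i\ge 1$, so $\cx J\ge c$. Hence $\cx J=c$, and combining this with the first paragraph yields $\cx E/J=\max\{\max(u):u\in G(J)\}$. The only genuinely delicate step is the no-cancellation observation in the middle paragraph; the remainder is the standard dictionary between complexity and polynomial growth of the Betti sequence, together with the explicit formula of Lemma \ref{betti}.
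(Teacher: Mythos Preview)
The paper does not give its own proof of this proposition; it merely cites \cite[Proposition 3.4]{GT}. So there is no in-paper argument to compare against.

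That said, your proof is correct and is exactly the natural route given the tools already quoted in Section~\ref{sect Prelimi}: Lemma~\ref{betti} expresses each $\beta^E_i(J)$ as a finite sum of binomial coefficients $\binom{\max(u)+i-1}{\max(u)-1}$, each of which is a polynomial in $i$ of degree $\max(u)-1$ with positive leading coefficient, so the top-degree terms cannot cancel and $\beta^E_i(J)$ is a polynomial in $i$ of degree exactly $c-1$ with $c=\max\{\max(u):u\in G(J)\}$. Your reduction $\cx E/J=\cx J$ via the long exact Tor sequence for $0\to J\to E\to E/J\to 0$ is also fine; the point that $J\subseteq\mm$ makes $K\otimes_E J\to K\otimes_E E$ the zero map is precisely what one needs for the boundary case $i=1$. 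This is, in essence, the argument one finds in \cite{GT} as well, so there is no methodological divergence to discuss.
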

In particular, if $J$ is stable and generated in one degree, it has
a linear resolution. An example for such an ideal is the
maximal graded ideal $\mm=(e_1,\ldots,e_n)$ of $E$ and all its
powers.

Let $<$ be a reverse lexicographic order on $E$ with $e_1 > e_2 >\ldots > e_n$. The
initial ideal of a graded ideal $J\subset E$ is the ideal generated
by the initial terms $\ini(f), f \in J$ with respect to this order,
and is denoted by $\ini(J)$. In the exterior algebra over an
infinite field, Aramova, Herzog and Hibi in \cite[Theorem 1.6]{AHH}
proved the existence of a non-empty Zariski-open subset $U \subseteq
GL(n; K)$ such that there is a monomial ideal $I\subseteq E$ with $I
= \ini(g(J))$ for all $g\in U$. This ideal $I$ is called the
\textit{generic initial ideal} of $J$, denoted by $\gin(J)$. The
generic initial ideal of a graded ideal is strongly stable if it
exists \cite[Proposition 1.7]{AHH}. This is independent of the
characteristic of the field in contrast to ideals in the polynomial
ring. In addition, if the graded ideal $J$ is componentwise linear,
then $J$ and $\gin(J)$ have the same graded Betti numbers ( see\cite[Theorem 2.1]{ArHeHi}).

Let $u=\sum_{k=1}^n \alpha_k e_k$ be a linear form in $E$. We call the set $\supp(u)= \{k: \alpha_k \neq 0\}$ the \emph{support} of $u$. Let $e_F$ be a monomial in $E$  where $F=\{i_1,\ldots,i_t\}\subseteq [n]$ and $1\not\in F$. Using Equation \eqref{generators2} one can check that
\begin{equation}
\label{formeleF}
\partial  e_F=(e_{i_2}-e_{i_1})\wedge\ldots\wedge(e_{i_t}-e_{i_1})=\sum_{j=1}^t(-1)^{j-1}\pa e_{F\setminus \{i_j\}\cup\{1\}}.
\end{equation}
Next we collect some facts and results about the intersection lattice and resonance varieties used in the following. Let $\Ac=\{H_1,\dots,H_n\}$ be an essential central affine
hyperplane arrangement in $\C^l$ with the intersection lattice $L(\Ac)$. Let $J$ be the Orlik-Solomon ideal and $A=E/J$ the Orlik-Solomon algebra of $\Ac$. We denote by $J_i$ the set of all homogeneous elements of degree $i$ of $J$ and by $J_{\leq i}$ the ideal generated by all homogeneous elements of degree $\leq i$ of $J$.

Observe that $L(\Ac)$ is a partially-ordered set whose elements are the linear subspaces of $\C^l$ obtained as intersections of sets of hyperplanes from $\Ac$ and ordered by reverse inclusion. The intersection lattice $L(\Ac)$ is a ranked
poset. Indeed, $\rank (X)$ is the codimension of $X$ in $\C^l$ for $X\in L(\Ac)$ and $\rank(\Ac)$ is the maximal value of $\{\rank(X): X\in L(\Ac)\}$. See \cite[Section 1.2]{SCDFSTY} for details. Note that if $X=H_{i_1}\cap\dots\cap H_{i_t}$ and $\rank(X)< t$ then $\{H_{i_1},\dots, H_{i_t}\}$ is a dependent set. In particular, if $\rank(\Ac)= r$ then all sets of more than $r$ hyperplanes are dependent sets and then $J_{\leq r}= J$.

Since every set of two hyperplanes are independent, $J_1=0$. The resonance varieties of Orlik-Solomon algebra $A$ can be
computed by the following formulas
\begin{eqnarray*}
R^1(A)&=&\{ u\in E_1: u=0 \text{ or there exists } v\in E_1, 0\neq u\wedge v\in J_2\},\\
R^p(A)&=&\{u\in E_1:  u=0 \text{ or there exists } v\in E_p, v\not\in J_p\cup uE_{p-1}, 0\neq
u\wedge v\in J_{p+1}\}.
\end{eqnarray*}

As shown in \cite{LY}, $R^1(A)$ is a variety in the affine space
$E_1=K^n$. Each component of $R^1(A)$ is a linear subspace of
$K^n$ (see \cite{F}). In other words, $R^1(A)$ is the union of a
subspace arrangement in $K^n$. As shown by Libgober and Yuzvinsky in
\cite{LY}, each subspace of $R^1(A)$ has dimension at least 2, two
distinct subspaces meet only at 0, and $R^p(A)$ is the union of
those subspaces of dimension greater than $d$, which can have
none-zero intersection. In \cite{F} Falk proved that if $u,v$
belong to the same irreducible component of $R^1(A)$, then $u\wedge
v\in J_2$. Falk also showed that, for each $X\in L_2(\Ac)$ which is the intersection of more than two hyperplanes, there is a
corresponding irreducible component of $R^1(A)$, called \emph{local
component} which is defined by
$$
L_X=\{(x_i)\in E_1=K^n: x_i= 0 \text{ if }
X\nsubseteq H_i \text{ and } \sum_{H_i\supseteq X} x_i=0\}.
$$
Moreover, the results in \cite[Theorem 4.46, Corollary
4.49]{SCDFSTY} and \cite[Theorem 3.1]{AAH} imply that
$R^p(A)\subseteq R^q(A)$ for $p<q\leq \rank \Ac$. We denote
by $V_E(A)$ the set of all elements $u$ of $A_1$ such that the set of
elements of $A$ annihilated by $u$ is not the same as $uA$. Then it follows that $R^p(A)\subseteq V_E(A)$ for all $1\leq p\leq \rank\Ac$. Moreover, $V_E(A)$ is a linear subspace of $E_1$ and $\dim_K V_E(A)=\cx_E(A)$. We refer to Aramova, Avramov, and Herzog
\cite{AAH} for more details, where $V_E(A)$ is called the \emph{rank
variety} of $A$.

\section{Resonance varieties of stable monomial ideals}
\label{sect irrestab}
In this section we investigate the irreducibility of  resonance varieties $R^p(E/J)$ of $E/J$ where $J$ is a stable monomial ideal. Since the generic initial ideal of a graded ideal is always stable, the class of stable monomial ideals can be seen as the generic case. Recall that we always assume that a graded $J$ is always non trivial and contains no variable.

\begin{thm}
\label{thmstrong} Let $J\subset E$ be a stable monomial ideal. Then the first resonance variety $R^1(E/J)$ of $E/J$ is irreducible.
\end{thm}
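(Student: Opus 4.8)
The plan is to work directly with the description of $R^1(E/J)$ for a monomial ideal $J$ and show that it is a single linear subspace of $E_1 = K^n$, which is automatically irreducible. Recall that $R^1(E/J) = \{u \in E_1 : u = 0 \text{ or there is } v \in E_1 \text{ with } 0 \neq u \wedge v \in J_2\}$. Since $J$ contains no variable, $J_2$ is spanned by the degree-$2$ monomials $e_a \wedge e_b \in J$. First I would identify which coordinate directions $e_k$ can possibly appear in the support of an element of $R^1(E/J)$: if $e_a \wedge e_b \in J$ is a minimal generator (say $a < b$, so $\max = b$), then stability forces $e_j \wedge e_b \in J$ for all $j < b$; iterating, every $e_i \wedge e_b \in J$ for $i < b$. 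So the relevant indices are controlled by the set $S = \{ b : \exists\, a < b \text{ with } e_a\wedge e_b \in G(J)_2\}$, i.e.\ the set of $\max$-values of the degree-$2$ generators. Let $s = \max S$ (assuming $G(J)_2 \neq \emptyset$; if $G(J)_2 = \emptyset$ then $J_2 = 0$ and $R^1(E/J) = \{0\}$, which is irreducible).

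The key claim is that $R^1(E/J) = W$, where $W = \{ u = \sum_k \alpha_k e_k : \alpha_k = 0 \text{ for } k > s \}$, the coordinate subspace on the first $s$ basis vectors. For the inclusion $W \subseteq R^1(E/J)$: given $0 \neq u \in W$, write $u = \sum_{k \leq s} \alpha_k e_k$ and pick any index $p \leq s$ with $\alpha_p \neq 0$; I claim $v = e_s$ works provided $u$ is not proportional to $e_s$, and otherwise a small modification does. Indeed $u \wedge e_s = \sum_{k < s} \alpha_k (e_k \wedge e_s)$, and by the stability consequence above each $e_k \wedge e_s \in J$ for $k < s$ — here one uses that $s \in S$, so some $e_a \wedge e_s \in G(J)_2$ and stability propagates it to all $e_k \wedge e_s$ with $k < s$. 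Hence $u \wedge e_s \in J_2$, and it is nonzero as long as $u \notin K e_s$; if $u = \alpha e_s$, replace $e_s$ by $e_{s-1}$ (or any $e_j$, $j<s$) and argue symmetrically using $e_j \wedge e_s \in J$. For the reverse inclusion $R^1(E/J) \subseteq W$: suppose $0 \neq u \wedge v \in J_2$ with $u, v \in E_1$. Expanding in the monomial basis, $u \wedge v = \sum_{a < b} (\alpha_a \beta_b - \alpha_b \beta_a)(e_a \wedge e_b)$, and this lying in the monomial ideal $J_2$ means every monomial $e_a \wedge e_b$ occurring with nonzero coefficient satisfies $e_a \wedge e_b \in J$. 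I then need: if $e_a \wedge e_b \in J$ with $a < b$, then $b \leq s$ — but this is false in general (stability gives smaller $\max$, not a bound the other way). So instead the right statement is that the support of $u$ is contained in $\{1,\dots,s\} \cup (\text{something})$; the correct formulation is that $R^1(E/J)$ equals the union over the connected structure, and one must check that in fact $u$ cannot use an index outside $\{1,\dots,s\}$ because any monomial $e_a\wedge e_b$ in $J$ with $b$ large would need, for $e_a\wedge e_b$ to appear in $u\wedge v$, both $a$ and $b$ in $\supp u \cup \supp v$, and a counting/minimality argument shows the resulting $u$ still lies in $W$ after re-examining which generators are available.

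The main obstacle I anticipate is precisely the reverse inclusion: showing that $\supp u \subseteq \{1, \dots, s\}$ for every $u \in R^1(E/J)$, i.e.\ that stability not only produces the large coordinate subspace $W$ inside $R^1(E/J)$ but also prevents anything bigger. The resolution should go as follows. Suppose $u = \sum \alpha_k e_k \in R^1(E/J) \setminus \{0\}$ with witness $v$, and suppose for contradiction that $\alpha_m \neq 0$ for some $m > s$. Since $0 \neq u \wedge v \in J_2$, there is a monomial $e_a \wedge e_b$ ($a<b$) appearing in $u\wedge v$ with $e_a \wedge e_b \in J$. By definition of $s$, any degree-$2$ monomial in $J$ has $\max$-value at most... — no: the generators of $J_2$ have $\max$-values in $S$, but $J$ may contain higher-degree generators, yet $J_2$ as a $K$-vector space is spanned exactly by the monomials obtainable from $G(J)_2$, and by stability these are precisely $\{e_i \wedge e_j : i < j,\ j \in S,\ \text{and } (i,j) \text{ dominated by some generator}\}$; in particular every monomial of $J_2$ has its larger index in $S$, hence $\leq s$. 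Therefore $b \leq s$. Running over all monomials appearing in $u \wedge v$, all their indices are $\leq s$. But the coefficient of $e_m \wedge e_b$ (for any $b$) in $u \wedge v$ is $\alpha_m \beta_b - \alpha_b \beta_m$; since such monomials with $m > s$ do not lie in $J_2$, we need $\alpha_m \beta_b = \alpha_b \beta_m$ for all $b$, forcing $v$ to be proportional to $u$ on the index $m$ and, pushing this through, forcing $u \wedge v$ to have no term involving $e_m$ at all — combined with $0 \neq u\wedge v\in J_2$ this shows $m$ simply does not contribute, and a clean induction removes all indices $> s$ from the "effective" support, yielding $u\wedge v\in \bigwedge^2 W$, whence $u \in W$. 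Thus $R^1(E/J) = W$ is a linear subspace, hence irreducible, completing the proof.
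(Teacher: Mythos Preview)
Your overall strategy coincides with the paper's: set $s=\max\{\max(u):u\in G(J)_2\}$ and prove $R^1(E/J)=\spa_K\{e_1,\dots,e_s\}$. However, both halves of your argument have genuine problems.

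\textbf{Forward inclusion.} Your witness $v=e_s$ does not work. Stability applied to a generator $e_a\wedge e_s$ (with $a<s$) gives $e_j\wedge e_a\in J$ for all $j<s$, not $e_k\wedge e_s\in J$ for all $k<s$. Concretely, take $J=(e_1e_2,\,e_1e_3)$, which is stable with $s=3$; then $e_2\wedge e_3\notin J$, so your argument fails for $u=e_2$. The paper fixes this by choosing the witness $v=e_r$ where $e_r\wedge e_s\in G(J)_2$ (so $r<s$): stability then yields $e_i\wedge e_r\in J$ for every $i<s$, and together with $e_r\wedge e_s\in J$ one gets $e_r\wedge w\in J_2$ for every $w\in\spa_K\{e_1,\dots,e_s\}$. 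Your conclusion is correct, but the specific claim ``$e_k\wedge e_s\in J$ for all $k<s$'' is false.

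\textbf{Reverse inclusion.} You correctly arrive at the key relations $\alpha_m\beta_b=\alpha_b\beta_m$ for all $b$, where $m>s$ and $\alpha_m\neq 0$. But from here you only extract that $u\wedge v$ has no term involving $e_m$, and then assert ``$u\wedge v\in\bigwedge^2 W$, whence $u\in W$''. That last implication does not follow: knowing $u\wedge v\in\bigwedge^2 W$ says nothing about $\supp(u)$ by itself. The paper instead finishes in one stroke: if $\beta_m=0$, the relations force $\beta_b=0$ for all $b$, so $v=0$; if $\beta_m\neq 0$, they force $\beta_b=(\beta_m/\alpha_m)\alpha_b$ for all $b$, i.e.\ $v$ is a scalar multiple of $u$, so $u\wedge v=0$. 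Either way one contradicts $0\neq u\wedge v$, and hence no index $m>s$ can lie in $\supp(u)$. Your ``clean induction'' is unnecessary and, as written, does not reach the desired conclusion.
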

\begin{proof} Let $t=\max\{\max(u) : u\in G(J)_2\}$.
There exists a integer $r$ with $1\leq r<t$ such that $u=e_r\wedge e_t\in J_2$. We claim that $\spa_K\{e_1, \ldots, e_t\}= R^1(E/J)$ which implies that $R^1(E/J)$ is irreducible.

At first note that
$e_i \wedge u/e_t \in J \text{ for all } i<t$, since $J$ is stable. Hence
$$
e_r\wedge e_i\in J_2
\text{ for } 1\leq i\leq t
\text{ and thus }
e_r\wedge \sum_{i=1}^t \alpha_i e_i \in J_2 \text{ for every } \alpha_i\in K.
$$
Observe that
$$
R^1(E/J)=\{ u \in E_1: u=0 \text{ or there exists } v\in E_1,\ 0\neq u\wedge v\in J_2\}.
$$
Then it follows  $\spa_K\{e_1, \ldots, e_t\}\subseteq R^1(E/J)$. In particular, for $t=n$ we see that equality holds, so assume $t<n$ in the following.

We consider an arbitrary element $0\neq u=\sum_{i=1}^n \alpha_i e_i\in R^1(E/J)$. Suppose that there exists an integer $s$ and $0\neq v=\sum_{j=1}^n \beta_j e_j\in E_1$ with
$$
t<s\leq n
 \text{ such that } \alpha_s\neq 0 \text{ and } 0\neq
u\wedge v\in J_2.
$$
By the choice of $t$ we see that
$$
e_s\wedge e_j\not\in J_2 \text{ for } j\in \{1,\ldots,n\}\setminus \{s\}.
$$
Thus the monomial $e_s\wedge e_j$ can not appear in $u\wedge v$. We get
$$
\alpha_s \beta_j-\alpha_j\beta_s = 0 \text{ for } j\in \{1,\ldots,n\}.
$$
If $\beta_s=0$ then $\beta_j=0$ for $j=1,\ldots,n$. This is a contradiction
to the fact that  $v\neq 0$. So $\beta_s\neq 0$. This implies that $\alpha_j\neq 0$ if and only if $\beta_j\neq 0$ and in this case
$$
\beta_j= \frac{\beta_s}{\alpha_s}\alpha_j
$$
Thus $v=k u$ for $k=\beta_s/\alpha_s$ and we see that $u\wedge v= 0$. This is also a contradiction to choice of $u$ and $v$. We get  $\alpha_s=0$ for every integer $s$ with $s>t$. Altogether we see that
$$
R^1(E/J) \subseteq \spa_K\{e_1,\ldots,e_t\}
\text{ and then }
R^1(E/J) =\spa_K\{e_1,\ldots,e_t\}.
$$
This concludes the proof.
\end{proof}

Theorem \ref{thmstrong} motivates the following question:
\begin{que}
Let $J\subset E$ be a stable monomial ideal. Are all resonance varieties $R^p(E/J)$ of $E/J$ irreducible for $p\geq 1$?
\end{que}

We have only little knowledge about $R^p(E/J)$ for $p>1$ which will be presented below. For the result we need at first:
\begin{lem}
\label{prohire}
Let $J\subset E$ be a stable monomial ideal and let
$$
t_p=\max\{\max(u): u\in G(J),\ \deg u = p\} \text{ for }  1\leq p \leq n.
$$
Then
$$
\spa_K\{e_1,\ldots,e_{t_p}\}\subseteq R^{p-1}(E/J) \text{ for } 1\leq p
\leq\max\{\deg u: u\in G(J)\}.
$$
\end{lem}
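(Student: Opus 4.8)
The plan is to compute $R^{p-1}(E/J)$ from the cochain complex $(E/J,u)$ and to reduce the membership question to a combinatorial statement about the monomials of $J$. For $u\neq 0$ one has $u\in R^{p-1}(E/J)$ iff there is $v\in E_{p-1}$ with $v\wedge u\in J_p$ and $v\notin J_{p-1}+uE_{p-2}$. I would first record the clean reformulation: multiplication by a nonzero linear form is exact on $E$, so $(E,u)$ is acyclic; feeding $0\to J\to E\to E/J\to 0$ into the long exact cohomology sequence gives $H^{p-1}(E/J,u)\cong H^{p}(J,u)$, and unravelling this shows $v\in J_{p-1}+uE_{p-2}$ iff $v\wedge u\in u\wedge J_{p-1}$ (the nontrivial direction uses $\ker(\cdot\wedge u\colon E_{p-1}\to E_p)=uE_{p-2}$). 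Hence it suffices, for every $0\neq u\in\spa_K\{e_1,\dots,e_{t_p}\}$, to produce $v\in E_{p-1}$ with $v\wedge u\in J_p\setminus(u\wedge J_{p-1})$.

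Now put $t=t_p$ and fix a minimal generator $w=e_F\in G(J)$ with $F=\{i_1<\dots<i_p\}$ and $i_p=t$. Stability gives $e_j\wedge(w/e_t)\in J$ for all $j<t$, so the monomial $m=e_{F\setminus\{t\}}$ satisfies $m\wedge e_k\in J_p$ for every $k\le t$, whence $m\wedge u\in J_p$ for every $u$ in the span. For a fixed $0\neq u=\sum_{k\le t}\alpha_k e_k$ I would choose $v=e_G$ of degree $p-1$, built from $w$ by a stability argument: keep the indices of $F$ that avoid $\supp(u)$ and use stability to push the remaining ones down to indices outside $\supp(u)$, arranging that $e_G\notin J$ (it remains a proper divisor of a monomial of $J$ coming from $w$) while $e_G\wedge e_k\in J$ for all $k\in\supp(u)$. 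Once $G\cap\supp(u)=\emptyset$, $v\wedge u=\sum_{k\in\supp(u)}\pm\alpha_k\,e_{G\cup\{k\}}$ is a sum of distinct monomials of $J$; comparing, for any $k_0\in\supp(u)$, the coefficient of $e_{G\cup\{k_0\}}$ on the two sides of a hypothetical identity $v\wedge u=u\wedge z$ with $z\in J_{p-1}$, and using $e_G\notin J$ together with $G\cap\supp(u)=\emptyset$, forces a contradiction; so $v\wedge u\notin u\wedge J_{p-1}$ and $u\in R^{p-1}(E/J)$.

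I expect the construction of $G$ to be the main obstacle. The easy cases --- $t=n$, $J_{p-1}=0$, $u=e_k$, or $\supp(u)\subseteq F$ (where $v=e_{F\setminus\{t\}}$, or $v=e_{F\setminus\{k\}}$ for a suitable $k\in\supp(u)$, works at once) --- are immediate and form the base of an induction on $|\supp(u)|$. The inductive step, which realizes $\supp(u)$ inside the standard monomials of $J$ while keeping the needed ``predecessors'' $e_{G\cup\{k\}}$ in $J$, is where stability of $J$ and the maximality of $t=t_p$ among the $\max$-values of the degree-$p$ generators must be exploited carefully; in the cases where no $(p-1)$-set $G$ disjoint from $\supp(u)$ has all $e_{G\cup\{k\}}$ in $J$, one instead takes $G$ meeting $\supp(u)$ so that those indices annihilate their terms in $v\wedge u$, leaving a single surviving monomial of $J$ all of whose $u$-predecessors are proper divisors of $w$ and hence not in $J$. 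The homological reduction in the first paragraph is routine; the combinatorial bookkeeping of this last step is the delicate part.
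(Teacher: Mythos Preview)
Your long-exact-sequence reformulation is correct, but the remainder is both incomplete and far more intricate than needed. You never actually build the set $G$; you only promise an induction whose key step --- pushing indices of $F$ outside $\supp(u)$ via stability while keeping $e_G\notin J$ and all $e_{G\cup\{k\}}\in J$ --- is not carried out, and your fallback when no disjoint $G$ exists is a one-line hope. As written this is a genuine gap: what you call the ``delicate part'' is essentially the whole argument.

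The paper bypasses all of this. Fix $w=e_F\in G(J)_p$ with $F=\{i_1<\cdots<i_p=t_p\}$. The witness is always one of the $p$ divisors $w/e_{i_q}=e_{F\setminus\{i_q\}}$, and these lie outside $J_{p-1}$ automatically because $w$ is a \emph{minimal} generator. For a nonzero linear form $v=\sum_{k\le t_p}\alpha_ke_k$ the paper makes a single case split: if $\supp(v)\not\subseteq F$ take $m=w/e_{t_p}$ (stability gives $e_k\wedge m\in J_p$ for every $k\le t_p$, and $v\wedge m\neq 0$ thanks to any index in $\supp(v)\setminus F$); if $\supp(v)\subseteq F$ take $m=w/e_{i_q}$ for any $i_q\in\supp(v)$ (then $v\wedge m=\pm\alpha_{i_q}w$). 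No disjoint-support $G$, no induction on $|\supp(v)|$ --- minimality of $w$ replaces the bookkeeping you were setting up.

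One remark in your favour: the paper only checks $m\notin J_{p-1}$ and $v\wedge m\neq 0$, not your sharper criterion $v\wedge m\notin v\wedge J_{p-1}$. Yours is the correct cohomological condition, and the paper's first-case witness can in fact fail it (for the stable ideal $J=(e_1e_2,\,e_2e_3e_4)$ with $F=\{2,3,4\}$ and $v=e_1+e_3$ one has $e_2e_3=e_1e_2-v\wedge e_2\in J_2+vE_1$). So your caution about $J_{p-1}+uE_{p-2}$ versus $J_{p-1}\cup uE_{p-2}$ is well placed; but the remedy is to tighten the verification of the paper's simple monomial witnesses, not to replace them with an elaborate inductive construction.
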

\begin{proof}
Let $u= e_{i_1}\wedge\ldots\wedge e_{i_{p}}\in G(J)_p$ where $1\leq i_1<\dots<i_p=t_p$. Since $G(J)$
is the minimal set of generators of $J$ and $u\in G(J)_p$, we have $u\not\in \mm J_{p-1}$. Therefore
$$
e_{i_1}\wedge\ldots\wedge
e_{i_{q-1}}\wedge\widehat{e_{i_q}}\wedge e_{i_{q+1}}\ldots \wedge e_{i_{p-1}}\wedge e_{i_p}\not\in J_{p-1} \text{ for } 1\leq q\leq p.
$$
Observe that $R^{p-1}(E/J)$ equals to the set
$$
\{u \in E_1: u=0 \text{ or there exists } v\in E_{p-1}, v\not\in J_{p-1}\cup uE_{p-2} \text{ and } 0\neq u\wedge v \in J_p\}.
$$
This implies $e_{i_q}\in R^{p-1}(E/J)$ for
$q=1,\ldots,p$.

Next we consider $i \in [t_p]\setminus \{i_1,\dots,i_p\}$. Since $J$ is stable and $t_p=\max(u)$ we have
$$
0\neq e_i \wedge (u/e_{t_p}) \in J_p \text{ and } u/e_{t_p}\not\in
J_{p-1}.
$$
Hence $e_i\in R^{p-1}(E/J)$. So $\{e_1,\dots,
e_{t_p}\}\subseteq R^{p-1}(E/J)$. Let
$$0\neq v=\sum_{j=1}^{t_p}
\alpha_j e_j \in \spa_K\{e_1,\dots,
e_{t_p}\}
$$
be an arbitrary element. Assume at first that $v \not\in \spa_K\{e_{i_1},\dots,
e_{i_p}\}$. This implies
$$
0\neq v \wedge (u/e_{t_p}) \in J_p.
$$
So $v\in R^{p-1}(E/J)$. Next we assume that $v\in
\spa_K\{e_{i_1},\ldots,e_{i_p}\}$ and $\alpha_{i_q}\neq 0$ for some $1\leq q \leq p$. Then
$$
0\neq v\wedge (u/e_{i_q})=\alpha_{i_q} u \in J_p.
$$
Again we see that $v\in R^{p-1}(E/J)$. Hence $\spa_K\{ e_1,\ldots,e_{t_p}\}\subseteq R^{p-1}(E/J)$ as desired.
\end{proof}
\begin{cor}
\label{cor:maximal}Let $J \subset E$ be a stable monomial ideal generated in one degree $p\geq 2$. Then the $(p-1)^{th}$ resonance variety of $E/J$ is maximal, i.e. $R^{p-1}(E/J)= V_E(E/J).$

In particular, $R^{p-1}(E/J)$ is irreducible.
\end{cor}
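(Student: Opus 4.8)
The plan is to trap $R^{p-1}(E/J)$ between two linear subspaces of the same dimension. Write $t=t_p=\max\{\max(u):u\in G(J),\ \deg u=p\}$ as in Lemma \ref{prohire}. Since $J$ is generated in the single degree $p$, every element of $G(J)$ has degree $p$, so $t=\max\{\max(u):u\in G(J)\}$, and Proposition \ref{cx} then identifies $t=\cx_E(E/J)$. This is the one place where the single-generated-degree hypothesis is used, and it is the only mildly delicate point.

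Next I would apply Lemma \ref{prohire} with this $p$ (which is legitimate since $p=\max\{\deg u:u\in G(J)\}$) to obtain $\spa_K\{e_1,\ldots,e_t\}\subseteq R^{p-1}(E/J)$. For the opposite containment I would use the rank variety $V_E(E/J)$ recalled in Section \ref{sect Prelimi}: it is a linear subspace of $E_1=K^n$ with $\dim_K V_E(E/J)=\cx_E(E/J)=t$, and $R^{p-1}(E/J)\subseteq V_E(E/J)$. The last inclusion holds for an arbitrary graded algebra $E/J$, not only for Orlik-Solomon algebras, and is immediate from the definition of $V_E$: if $u\notin V_E(E/J)$ then the annihilator of $u$ in $E/J$ coincides with $u(E/J)$, so the cochain complex $(E/J,u)$ is exact in every degree, whence $H^{p-1}(E/J,u)=0$ and $u\notin R^{p-1}(E/J)$.

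Combining the two steps gives $\spa_K\{e_1,\ldots,e_t\}\subseteq R^{p-1}(E/J)\subseteq V_E(E/J)$, where the flanking sets are both linear subspaces of $K^n$ of the same dimension $t$; hence all three coincide, which yields $R^{p-1}(E/J)=V_E(E/J)$. Since this set is a linear subspace of $K^n$, it is in particular irreducible, proving the ``in particular'' clause. I do not expect a genuine obstacle here: the real content sits in Lemma \ref{prohire} and in the dimension formula $\dim_K V_E(E/J)=\cx_E(E/J)$, and the argument is just a dimension count once one has checked $t=\cx_E(E/J)$ and that the inclusion $R^{p-1}\subseteq V_E$ does not require the Orlik-Solomon hypothesis.
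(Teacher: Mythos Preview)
Your proposal is correct and follows essentially the same route as the paper: sandwich $R^{p-1}(E/J)$ between $\spa_K\{e_1,\ldots,e_t\}$ (via Lemma~\ref{prohire}) and $V_E(E/J)$, then use Proposition~\ref{cx} together with $\dim_K V_E(E/J)=\cx_E(E/J)$ to force equality by a dimension count. Your explicit justification that $R^{p-1}(E/J)\subseteq V_E(E/J)$ holds for an arbitrary graded quotient $E/J$ (not only Orlik--Solomon algebras) is a welcome clarification, since the paper's preliminaries phrase this inclusion in the Orlik--Solomon setting while the corollary itself concerns stable monomial ideals.
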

\begin{proof} Let $t=\max\{\max(u): u\in G(J)\}$. With Lemma \ref{prohire} we see
$\spa_K\{e_1,\ldots,e_t\}\subseteq R^{p-1}(E/J)$. In addition, by \cite[Theorem 3.1 (2)]{AAH} and Proposition  \ref{cx} we know
$$
\dim_K V_E(E/J)= \cx_E(E/J)=t.
$$
Since
$$
\spa_K\{e_1,\ldots,e_t\}\subseteq R^{p-1}(E/J)\subseteq V_E(E/J)
\text{ and }
\dim_K V_E(E/J)=t
$$
we get that $R^{p-1}(E/J)= V_E(E/J)$.
\end{proof}

\section{Resonance varieties of hyperplane arrangements}
\label{sect irreOS}

The purpose of this section is to present results related to the question whether resonance varieties  of hyperplane arrangements are irreducible. At first we consider the first resonance variety. For the first main result we need the following observation.

\begin{lem}
\label{lem:linearforms}
Let $J=(l_1,\dots,l_t) \subset E$ be an ideal generated by $t$ linearly independent 1-forms $l_1,\dots,l_t$. Then $J^d$ has an $d$-linear resolution for all integers $d\geq 1$.
\end{lem}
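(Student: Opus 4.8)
The plan is to reduce to the case $t=n$ by a change of variables, and then to identify $J^d$ with a familiar ideal whose resolution is understood. After a $K$-linear substitution we may assume $l_i = e_i$ for $i = 1,\dots,t$, so that $J = (e_1,\dots,e_t)$. Write $E = E' \tensor_K E''$ where $E' = K\langle e_1,\dots,e_t\rangle$ and $E'' = K\langle e_{t+1},\dots,e_n\rangle$, and note $J = \mm' E$ where $\mm' = (e_1,\dots,e_t)$ is the maximal graded ideal of $E'$. Since $E$ is free (hence flat) as a module over $E'$, one has $J^d = (\mm')^d E$, and a minimal graded free resolution of $(\mm')^d$ over $E'$ base-changes to a minimal graded free resolution of $J^d$ over $E$; in particular the graded Betti numbers are unchanged and linearity is preserved. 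So it suffices to treat $J = \mm'$ inside $E'$, i.e.\ to show $(\mm')^d$ has a $d$-linear resolution over $E'$.

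For that last point I would invoke the remark recorded just after Proposition \ref{cx}: the powers $\mm^k$ of the maximal graded ideal $\mm = (e_1,\dots,e_n)$ of $E$ are stable monomial ideals generated in a single degree, hence by Lemma \ref{betti} (or directly by Proposition \ref{cx}, since every generator has $\max(u)=n$ and the generators all sit in one degree) they have a linear resolution. Applying this with $E'$ in place of $E$ and $k=d$ gives that $(\mm')^d$ has a $d$-linear resolution over $E'$. Unwinding the base change from the previous paragraph, $J^d = (\mm')^d E$ has a $d$-linear resolution over $E$, as claimed.

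The one point that needs a little care — and is the main obstacle — is the base-change step: I must make sure that tensoring a minimal free resolution over $E'$ with $E$ over $E'$ stays minimal and exact. Exactness follows because $E$ is free over $E'$ (with basis the monomials in $e_{t+1},\dots,e_n$), and minimality is preserved because the differentials have entries in $\mm' \subseteq \mm$ and this property persists after extending scalars. Equivalently, one can avoid the tensor language entirely and argue directly that the Cartan–type (Eagon–Northcott style) complex resolving $(\mm')^d$ over $E'$ remains a resolution after adjoining the extra exterior variables, since those variables act as non-zero-divisors in the relevant sense on a free module. Either way the linear shape of the resolution is manifestly retained, so $J^d$ has a $d$-linear resolution for every $d \geq 1$.
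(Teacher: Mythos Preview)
Your argument is correct, but it takes a longer path than the paper's. Both proofs begin identically with the change of coordinates $J=(e_1,\dots,e_t)$; after that, the paper simply observes that
\[
J^d = (e_F : F\subseteq\{1,\dots,t\},\ |F|=d)
\]
is already a stable monomial ideal of $E$ generated in the single degree $d$, and hence has a $d$-linear resolution by the remark following Proposition~\ref{cx} (equivalently \cite[Corollary 3.4(a)]{AHH}). There is no need to pass to the subalgebra $E'$ and base-change back: stability can be checked directly in $E$, since for any monomial $u\in J^d$ one has $\max(u)\le t$, and $e_j\cdot u/e_{\max(u)}$ is again a degree-$d$ monomial supported in $\{1,\dots,t\}$, hence lies in $J^d$.

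Your flatness/base-change argument is sound and a reasonable general instinct; it just adds a layer that the direct check avoids. One small caution on your alternative phrasing: the variables $e_{t+1},\dots,e_n$ are never non-zero-divisors in the exterior algebra (each squares to zero), so that parenthetical remark should be dropped or made precise --- the freeness of $E$ over $E'$ is the clean statement doing the work, and you already have it.
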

\begin{proof}
After an appropriate change  of coordinates, we may assume that $J=(e_1,\dots,e_t)$. Then for a fixed integer $d\geq 1$, we have
$$
J^d=(e_1,\dots,e_t)^d=(e_F: F\subseteq \{1,\dots,t\}, |F|=d).
$$
Observe that $J^d$ is a stable monomial ideal of $E$ which is generated in degree $d$. Therefore $J^d$ has an $d$-linear resolution (see, e.g., \cite[Corollary 3.4 (a)]{AHH}).
\end{proof}

\begin{thm}
\label{thmirre2}
Let $\Ac$ be an essential central hyperplane arrangement with Orlik-Solomon ideal $J$ and Orlik-Solomon algebra $A=E/J$. The following statements are equivalent:
\begin{enumerate}
\item
The first resonance variety $R^1(A)$ of $A$ is irreducible;
\item
The ideal $J_{\langle 2 \rangle }$ has a 2-linear resolution.
\end{enumerate}
In particular, if $J$ is componentwise linear, then $R^1(A)$ is irreducible.
\end{thm}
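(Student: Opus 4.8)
The plan is to reduce everything to a description of $J_2$ in terms of the irreducible components $V_1,\dots,V_s$ of $R^1(A)$; the closing statement then follows at once, since if $J$ is componentwise linear then $J_{\langle 2\rangle}$ has a $2$-linear resolution by the very definition of componentwise linearity, so (2) holds and hence (1).

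\emph{A structure lemma for $J_2$.} First I would prove $J_2=\sum_{k=1}^{s}\Lambda^2 V_k$ inside $E_2$. The inclusion $\supseteq$ is Falk's theorem: any two elements of a fixed component $V_k$ lie in the same component, so their wedge lies in $J_2$ (and proportional elements wedge to $0$), whence $\Lambda^2 V_k\subseteq J_2$. For $\subseteq$, note that $J_1=0$, so $J_2$ is spanned by the degree-$2$ Orlik--Solomon generators $\partial e_F$ with $|F|=3$ a dependent set; by \eqref{generators2} each such generator is \emph{decomposable}, $\partial e_F=g\wedge h$ with $g,h$ linearly independent linear forms. Setting $W=\spa_K\{g,h\}$ we have $\Lambda^2 W=K\,(g\wedge h)\subseteq J_2$, so every nonzero element of $W$ lies in $R^1(A)=\bigcup_k V_k$; since $W$ is an irreducible variety covered by the finitely many closed subsets $W\cap V_k$, it lies in a single $V_k$, and therefore $\partial e_F=g\wedge h\in\Lambda^2 V_k$. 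Thus $J_{\langle 2\rangle}$ is the ideal generated by $\sum_k\Lambda^2 V_k$, i.e. $J_{\langle 2\rangle}=\sum_{k=1}^{s}L_k^2$, where $L_k\subset E$ is the ideal generated by a basis of $V_k$. The same covering argument also gives the companion fact that if $0\ne u_i\in V_i$ and $0\ne u_j\in V_j$ with $i\ne j$, then $u_i\wedge u_j\notin J_2$ (else $\spa_K\{u_i,u_j\}$ would lie in one component, forcing $V_i=V_j$).

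\emph{$(1)\Rightarrow(2)$.} If $R^1(A)$ is irreducible then $s=1$ and $J_2=\Lambda^2 V$ for the unique component $V$, say $\dim_K V=t\ge 2$. Changing coordinates so that $V=\spa_K\{e_1,\dots,e_t\}$ (Betti numbers are unaffected), we get $J_{\langle 2\rangle}=(e_ie_j:1\le i<j\le t)=(e_1,\dots,e_t)^2$, which has a $2$-linear resolution by Lemma~\ref{lem:linearforms} applied with $d=2$ to the linearly independent forms $e_1,\dots,e_t$.

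\emph{$(2)\Rightarrow(1)$.} I argue by contraposition: assuming $s\ge 2$ I want to show $J_{\langle 2\rangle}=\sum_k L_k^2$ is not $2$-linear, i.e. $\beta^E_{1,4}(J_{\langle 2\rangle})\ne 0$. Using the structure lemma, after a coordinate change I may take $e_1,e_2\in V_1$ and $e_3,e_4\in V_2$ with $e_1,e_2,e_3,e_4$ linearly independent, $e_1e_2,\,e_3e_4\in J_2$, and $e_i\wedge e_j\notin J_2$ for $i\in\{1,2\}$, $j\in\{3,4\}$. The Koszul relation between the generators $e_1e_2$ and $e_3e_4$ is a degree-$4$ syzygy of $J_{\langle 2\rangle}$, and the heart of the matter is to show it is a \emph{minimal} one. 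When $V_1$ and $V_2$ can be placed in complementary coordinate blocks this is clean: then $E/(L_1^2+L_2^2)$ is a tensor product over $K$ of two $2$-linear exterior quotients, each having a nonzero $\beta_{1,2}$, and a Künneth computation yields $\beta^E_{2,4}\bigl(E/(L_1^2+L_2^2)\bigr)\ne 0$. In general, however, the remaining components $V_3,\dots,V_s$ can produce additional linear syzygies of $J_{\langle 2\rangle}$ that might absorb this Koszul relation --- and indeed for an arbitrary arrangement of $2$-dimensional subspaces with pairwise trivial intersections the implication $(2)\Rightarrow(1)$ can fail --- so this is precisely where the Orlik--Solomon hypothesis must enter. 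The decisive step, and the one I expect to be the main obstacle, is to use finer structural properties of the components of $R^1$ of an arrangement --- e.g. that the generators $\partial e_F$ arising from distinct components stay "monomially separated" after a suitable change of coordinates, so that no linear syzygy of $J_{\langle 2\rangle}$ can couple the generators of two fixed components, or else a reduction of the general situation to the two-component case --- which then forces the Koszul syzygy above to be minimal and completes the proof.
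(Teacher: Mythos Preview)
Your structure lemma $J_2=\sum_k\Lambda^2 V_k$ is correct, and your argument for $(1)\Rightarrow(2)$ is complete: once $s=1$ you get $J_{\langle 2\rangle}=L^2$ for $L$ the ideal generated by a basis of the unique component, and Lemma~\ref{lem:linearforms} finishes. This is essentially the same conclusion the paper reaches, though the paper argues more concretely from the lattice: irreducibility forces a unique rank-$2$ flat $X=H_1\cap\dots\cap H_s$ containing $\ge 3$ hyperplanes, every dependent triple lies in $\{1,\dots,s\}$, and one computes directly that $J_{\langle 2\rangle}=(e_2-e_1,\dots,e_s-e_1)^2$. Your abstract route via Falk's theorem plus the irreducibility-of-a-$2$-plane covering trick is arguably cleaner and sidesteps any case analysis on local versus non-local components.

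For $(2)\Rightarrow(1)$, however, you have a genuine gap, and you say so yourself: you do not know how to show that the Koszul syzygy between $e_1e_2$ and $e_3e_4$ remains \emph{minimal} once the other components $V_3,\dots,V_s$ contribute generators and linear syzygies to $J_{\langle 2\rangle}$. Your sketch names the difficulty accurately but does not resolve it, and the vague appeal to ``finer structural properties of the components of $R^1$ of an arrangement'' is not an argument. As you correctly observe, the implication fails for arbitrary ideals with this subspace-arrangement structure, so something specific to Orlik--Solomon algebras must be invoked; you have not identified what.

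The paper takes a completely different route for $(2)\Rightarrow(1)$, avoiding any direct syzygy computation. Assuming $J_{\langle 2\rangle}$ is $2$-linear, pass to $\gin(J_{\langle 2\rangle})$, which is strongly stable, generated in degree $2$, and has the same graded Betti numbers. By Lemma~\ref{betti} the function $i\mapsto\beta^E_{i,i+2}(J_{\langle 2\rangle})$ is a polynomial of degree $t-1$, where $t=\max\{\max(u):u\in G(\gin(J_{\langle 2\rangle}))\}$. Schenck--Suciu \cite[Theorem~4.3]{SS1} identifies this degree as $\dim R^1(A)-1$, so $\dim R^1(A)=t$ and some component has dimension $t$. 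Now two bounds are played off against each other: \cite[Theorem~5.6]{SS1} gives, for $i\gg 0$,
\[
\beta^E_{i,i+2}(J_{\langle 2\rangle})\;\ge\;\sum_{L_j}(i+1)\binom{\dim L_j+i}{i+2}\;\ge\;(i+1)\binom{t+i}{i+2},
\]
while the containment $G(\gin(J_{\langle 2\rangle}))\subseteq G((e_1,\dots,e_t)^2)$ and Lemma~\ref{betti} give the reverse inequality $\beta^E_{i,i+2}(J_{\langle 2\rangle})\le\beta^E_{i,i+2}((e_1,\dots,e_t)^2)=(i+1)\binom{t+i}{i+2}$. Equality in the Schenck--Suciu lower bound then forces $R^1(A)$ to have exactly one component. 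The Orlik--Solomon hypothesis enters precisely through the two cited results from \cite{SS1}, which are specific to arrangements; this is what replaces the missing step in your syzygy argument.
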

\begin{proof}
(i) $\Rightarrow$ (ii): Assume that $R^1(A)$ is irreducible. Since elements of $L_2(\Ac)$, which are intersections of more than two hyperplanes correspond to the local components of $R^1(A)$ as noted above (see  \cite{F}), there is exactly one element $X$ in $L_2(\Ac)$, which is an intersection of more than two hyperplanes. We choose a maximal integer $s$ with $3\leq s\leq n$
such that $X$ is the intersection of $s$ hyperplanes of the arrangement. Without loss of generality we assume that $\Ac=\{H_1,\dots,H_s,H_{s+1},\dots,H_{n}\}$ and $X= H_1\cap H_2\cap\ldots\cap
H_s$.

Let $F=\{i,j,k\} \subseteq \{1,\dots,s\}$ with $|F|=3$. Since
$$
2 = \rank (H_1\cap H_2\cap\ldots\cap
H_s) \geq  \rank  (H_i \cap H_j \cap H_k) \geq 2
$$
we get that $H_i \cap H_j \cap H_k= H_1\cap H_2\cap\ldots\cap
H_s$ and thus $F$ is a dependent set of $\Ac$. Next we assume that $G=\{i,j,k\}\subseteq \{1,\dots,n\}$ with $|G|=3$ where for example $i\geq s+1$.  If $G$ is dependent, then $H_i\cap H_j\cap H_k$ would have rank 2 which implies by our assumption on $L_2(\Ac)$ that $H_i\cap H_j\cap H_k=X$. But then it would follow that
$$
X=
H_1\cap H_2\cap\ldots\cap
H_{s}=H_1\cap H_2\cap\ldots\cap
H_{s}\cap H_i\cap H_j\cap H_k
$$
which is a contradiction to the choice of $s$. Hence
\begin{eqnarray*}
J_{\langle 2\rangle}
&=& (\partial e_F: F \text{ is dependent}, |F|=3)\\
&=& ((e_i-e_k)\wedge (e_j-e_k) : \{i,j,k\} \text{ is dependent for pairwise distinct } 1\leq i,j,k \leq s )\\
&=& ((e_i-e_1)\wedge (e_j-e_1) : \{1,i,j\} \text{ is dependent for pairwise distinct } 2\leq i,j\leq s )\\
&=& (e_2-e_1,\ldots,e_s-e_1)^2.
\end{eqnarray*}

Note that we used at the third equation Formula \eqref{formeleF}. It follows from Lemma \ref{lem:linearforms} that $J_{\langle 2\rangle}$ has a $2$-linear resolution.

(ii) $\Rightarrow$ (i):
Since $J_{\langle 2 \rangle}$ has 2-linear resolution, it has regularity 2 as well as $\gin(J_{\langle 2 \rangle})$. In particular, $\gin(J_{\langle 2 \rangle})$ is generated in degree 2. Moreover, $J_{\langle 2 \rangle}$ and $\gin( J_{\langle 2 \rangle})$ have the same graded Betti numbers; see \cite[Theorem 2.1]{ArHeHi}. Note that $\gin(
J_{\langle 2\rangle})$ is a strongly stable monomial ideal and $G(\gin( J_{\langle 2\rangle}))_2=G(\gin( J_{\langle 2\rangle}))$. By Lemma \ref{betti} we get
$$
\beta^E_{i,i+2}(J_{\langle 2\rangle})=\beta^E_{i,i+2}(\gin(J_{\langle 2\rangle}))= \sum_{u\in G(\gin( J_{\langle 2\rangle}))}\binom{\max(u)+i-1}{\max(u)-1}.
$$
We consider the polynomial function
$$
P\colon \Q \to \Q,\
P(i)=\sum_{u\in G(\gin( J_{\langle 2\rangle}))}\binom{\max(u)+i-1}{\max(u)-1}.
$$
Observe that  $\deg
P=t-1$ where $t=\max\{\max(u): u\in G(\gin( J_{\langle 2\rangle}))\}$. It is a consequence of \cite[Theorem 4.3]{SS1} that $\deg P
=\dim R^1(A)-1$.  Recall that we consider $R^1(A)$ as an affine variety in $E_1=K^n$ while in \cite{SS1} this space is viewed as a projective variety.
It follows  $\dim R^1(A)=t$. As noted above $R^1(A)$ is the union of linear components $L_j$. There exists one linear component, say $L_p$, of $R^1(A)$
such that
$\dim L_p=t$. By \cite[Theorem 5.6]{SS1} (see also Section \ref{sect Prelimi}) we have for $i\gg 0$ that
$$
\beta^E_{i,i+2}(J_{\langle 2\rangle})\geq \sum_{L_j \text{ component of }
R^1(\Ac)}(i+1) \binom{\dim L_j+i}{i+2}\geq (i+1)\binom{t+i}{i+2}.
$$
Since  $\gin (J_{\langle 2\rangle})$ and $(e_1,\ldots,e_t)^2$ are both strongly stable monomial ideals generated in degree 2 and  by the definition of $t$ we get $G(\gin (J_{\langle 2\rangle}))\subseteq
G(( e_1,\ldots,e_t)^2)$ we see with Lemma \ref{betti} that
$$
\beta^E_{i,i+2}(\gin(
J_{\langle 2\rangle}))\leq \beta^E_{i,i+2} (( e_1,\ldots,e_t)^2) \text{ for all } i\geq 0.
$$
Lemma \ref{betti} and a direct computation shows that
$$
\beta^E_{i,i+2} (( e_1,\ldots,e_t)^2)
=
(i+1)\binom{t+i}{i+2}\text{ for all } i\geq 0.
$$
(This equation is, e.g., a consequence from \cite[Proposition 6.12]{GT}.) Using all inequalities together we get that
$$
\beta^E_{i,i+2}
(J_{\langle 2\rangle})=\beta^E_{i,i+2}(\gin( J_{\langle 2\rangle}))
=(i+1)\binom{t+i}{i+2} \text{ for } i\gg 0.
$$
Using again \cite[Theorem 5.6]{SS1} this implies that
$R^1(A)$ has exactly one irreducible component. Thus
$R^1(A)$ is irreducible.
\end{proof}

If the rank of the arrangement is small, we get:

\begin{cor}
\label{oscom}
Let $\Ac$ be an essential central hyperplane arrangement with Orlik-Solo\-mon ideal $J$ and Orlik-Solo\-mon algebra $A=E/J$ such that $\rank (\Ac)\leq 3$. The following statements are equivalent:
\begin{enumerate}
\item
The first resonance variety $R^1(A)$ of $A$ is irreducible;
\item
$J$ is componentwise linear.
\end{enumerate}
\end{cor}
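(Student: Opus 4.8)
The implication (ii)~$\Rightarrow$~(i) is precisely the last assertion of Theorem~\ref{thmirre2}, so only (i)~$\Rightarrow$~(ii) needs an argument. Assume $R^1(A)$ is irreducible. By Theorem~\ref{thmirre2} the ideal $J_{\langle 2\rangle}$ has a $2$-linear resolution, and the proof of that theorem gives more: either $J_{\langle 2\rangle}=0$, or there is a single flat $X=H_1\cap\cdots\cap H_s\in L_2(\Ac)$ lying on $s\ge 3$ of the hyperplanes, the dependent triples are exactly the $3$-subsets of $\{1,\dots,s\}$, and after a linear change of coordinates $J_{\langle 2\rangle}=(e_2-e_1,\dots,e_s-e_1)^2$. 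Since $\rank(\Ac)\le 3$ we have $J=J_{\leq 3}$, so $J$ is generated in degrees $2$ and $3$; if $\rank(\Ac)\le 2$ then $J=J_{\langle 2\rangle}$ and we are done by Theorem~\ref{thmirre2}, so assume $\rank(\Ac)=3$.

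The first step of the plan is to reduce the statement to a single claim in degree $3$. Because $J$ is generated in degrees $\le 3$, a routine verification gives $(J_{\langle i\rangle})_j=J_j$ for all $j\ge i\ge 3$, hence $J_{\langle i\rangle}=\mm^{\,i-3}J_{\langle 3\rangle}$ for every $i\ge 3$, while $J_{\langle 1\rangle}=0$. Now suppose we know that $J_{\langle 3\rangle}$ has a $3$-linear resolution. A module with a $d$-linear resolution is componentwise linear (Section~\ref{sect Prelimi}), and the submodule it generates in degree $d+1$ is exactly $\mm$ times it; hence that submodule again has a $(d+1)$-linear resolution. Iterating from $J_{\langle 3\rangle}$ shows that $\mm^{\,i-3}J_{\langle 3\rangle}=J_{\langle i\rangle}$ has an $i$-linear resolution for all $i\ge 3$; together with $J_{\langle 1\rangle}=0$ and the $2$-linearity of $J_{\langle 2\rangle}$ this is exactly the statement that $J$ is componentwise linear, i.e.\ (ii).

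It thus remains to prove that $J_{\langle 3\rangle}$ has a $3$-linear resolution, and this is the main obstacle. I would first make $J_{\langle 3\rangle}$ explicit. From the identity $\partial e_{\{a,b,c,d\}}=\partial(e_ae_be_c)\wedge e_d-e_ae_be_c$ (up to sign) together with the observation that $e_ae_be_c\in\mm J_{\langle 2\rangle}$ whenever $a,b,c\in\{1,\dots,s\}$ --- because $\bigwedge^3\langle e_1,\dots,e_s\rangle=E_1\wedge\bigwedge^2\langle e_2-e_1,\dots,e_s-e_1\rangle$ --- one gets $\partial e_G\in\mm J_{\langle 2\rangle}$ for every $4$-subset $G$ with $|G\cap\{1,\dots,s\}|\ge 3$. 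Since $\rank(\Ac)=3$ makes every $4$-subset dependent, $J_3=E_1\wedge J_2+\spa_K\{\partial e_G:|G|=4\}$, and therefore
\[
J_{\langle 3\rangle}=\mm J_{\langle 2\rangle}+\Ic,\qquad \Ic:=(\partial e_G:\ G\subseteq[n],\ |G|=4),
\]
where $\Ic$ is the Orlik--Solomon ideal of a generic central arrangement of $n$ hyperplanes of rank $3$; such an ideal has a $3$-linear resolution (see, e.g., \cite{EPY}), and $\mm J_{\langle 2\rangle}=(J_{\langle 2\rangle})_{\langle 3\rangle}$ has a $3$-linear resolution by the propagation argument above.

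It therefore suffices to show that the sum of the two $3$-linear ideals $\mm J_{\langle 2\rangle}$ and $\Ic$ is again $3$-linear, and here the explicit form of $J_{\langle 2\rangle}$ and the hypothesis $\rank(\Ac)\le 3$ must really be used. One route is to feed $0\to\mm J_{\langle 2\rangle}\cap\Ic\to\mm J_{\langle 2\rangle}\oplus\Ic\to J_{\langle 3\rangle}\to 0$ into $\Tor^E(K,-)$, which reduces the problem to the claim that $\mm J_{\langle 2\rangle}\cap\Ic$ has a $4$-linear resolution. A second route, in the spirit of the proof of Theorem~\ref{thmirre2} and I think cleaner, is to pass to generic initial ideals and to show, by a Hilbert-function computation comparing $\gin(J_3)$ with the degree-$3$ component of the strongly stable ideal $(e_1,\dots,e_t)^3$ for the appropriate $t$, that $\gin(J_{\langle 3\rangle})$ has all its minimal generators in degree $3$; then $\gin(J_{\langle 3\rangle})$ is strongly stable and generated in one degree, hence $3$-linear by Lemma~\ref{betti}, which forces $\reg_E(J_{\langle 3\rangle})=3$ and so, $J_{\langle 3\rangle}$ being generated in degree $3$, a $3$-linear resolution. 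Equivalently, since $J/J_{\langle 3\rangle}$ is concentrated in degree $2$, the whole problem reduces via $0\to J_{\langle 3\rangle}\to J\to (E/\mm)(-2)^{\dim_K J_2}\to 0$ to the regularity bound $\reg_E(E/J)\le 2$ for the Orlik--Solomon algebra, and establishing that bound is the real heart of the matter.
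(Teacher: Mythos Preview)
Your reduction is sound: you correctly identify that everything hinges on showing $J_{\langle 3\rangle}$ has a $3$-linear resolution, and you correctly observe (via the short exact sequence $0\to J_{\langle 3\rangle}\to J\to K(-2)^{\dim_K J_2}\to 0$) that this is equivalent to the regularity bound $\reg_E(E/J)\le 2$. The gap is that you then stop. You propose two routes---a $\Tor$ computation on $\mm J_{\langle 2\rangle}\cap\Ic$, or a Hilbert-function comparison for $\gin(J_{\langle 3\rangle})$---but carry out neither, and you end by naming the regularity bound as ``the real heart of the matter'' without supplying it. As written, the argument is an outline that isolates the right obstruction but does not overcome it.

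The paper's proof closes exactly this gap by citation rather than computation. It invokes \cite[Corollary~6.7]{GT}, which gives the general bound $\reg_E(E/J)\le \rank(\Ac)-1$ for Orlik--Solomon algebras; with $\rank(\Ac)\le 3$ this yields $\reg_E(J)=\reg_E(J_{\le 3})\le 3$ immediately. The paper then appeals to a second black box, \cite[Theorem~5.3.7]{G}, which says that a graded ideal $J$ is componentwise linear provided $\reg_E(J_{\le k})\le k$ for all $k$; this replaces your hands-on propagation argument $J_{\langle i\rangle}=\mm^{\,i-3}J_{\langle 3\rangle}$. So the paper's proof is structurally the same as your final paragraph---reduce to the regularity bound---but it knows where that bound lives in the literature. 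Your decomposition $J_{\langle 3\rangle}=\mm J_{\langle 2\rangle}+\Ic$ with $\Ic$ the generic rank-$3$ Orlik--Solomon ideal is correct and interesting, but pushing either of your proposed routes through would amount to reproving the special case of \cite[Corollary~6.7]{GT} for rank~$3$; citing it is both shorter and what the paper actually does.
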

\begin{proof}
(i) $\Rightarrow$ (ii):
Since $R^1(A)$ is irreducible, we get that
$J_{\leq 2}=J_{\langle 2 \rangle}$ has 2-linear resolution and thus $\reg (J_{\leq 2})=2$.

We have $J=J_{\leq 3}$ because $\rank (\Ac)\leq 3$. It follows from \cite[Corrolary 6.7]{GT} that
$$
\reg (J_{\leq 3})=\reg (J)=\reg(E/J)+1 \leq 3.
$$
Moreover, $\reg (J_{\leq k})\leq 3\leq k$ for $k\geq 3$.
Now it follows from \cite[Theorem 5.3.7]{G} that $J$ is componentwise linear.

(ii) $\Rightarrow$ (i):
If $J$ is componentwise linear, then $J_{\langle 2\rangle}$ has 2-linear resolution. Hence Theorem \ref{thmirre2} implies that $R^1(A)$ is irreducible.
\end{proof}

There exists Orlik-Solomon ideals which are componentwise linear, but do not have a linear resolution as the following example shows.
\begin{exa} Let $\Ac$ be an essential central
hyperplane arrangement in $\C^3$ with defining polynomial
$$
Q=xy(x-y)z(2x+y-z)(x+3y+z).
$$
Let $E=K\langle e_1,\dots,e_6\rangle$ be the exterior algebra where each $e_i$ corresponds to $i$-th factor in the equation of the polynomial. The Orlik-Solomon ideal of $\Ac$ is
$$
J=(\partial e_{123})+(\partial e_{ijkl} :  \{i,j,k,l\} \subseteq [6]).
$$
We see that $L_2(\Ac)$ has only one element $X= H_1\cap H_2\cap H_3$ such that $|X|\geq 3$. Hence $R^1(A)= \spa_K\{(e_2-e_1), (e_3-e_1)\}$ is irreducible.
By Corollary \ref{oscom}, the ideal $J$ is a componentwise linear ideal. We observe that the elements $\partial e_{ijkl}$ are not redundant for all $1\leq i,j,k,l\leq 6$, so $J\neq J_{\langle 2\rangle}$. This implies that $J$ is not generated in one
degree. Hence $J$ has not a linear resolution.
\end{exa}

We saw that the property componentwise linear of an Orlik-Solomon ideal can be characterized in terms of data of the hyperplane arrangement if the rank is small. We wonder if a similar statement can be proved for arbitrary essential central hyperplane arrangements. Note that a characterization of having a linear resolution is given in \cite[Corollary 3.6]{EPY}; see also \cite[Theorem 6.11]{GT} which is a first step to such a result.

We ask ourself the following
\begin{que}
Assume that the Orlik-Solomon ideal $J$ of an essential central hyperplane
arrangement $\Ac$ is componentwise linear. Are then all
resonance varieties $R^p(A)$ where $0\leq p\leq \rank(\Ac)$ irreducible?
\end{que}

Another corollary from Theorem \ref{thmirre2} is:
\begin{cor}
\label{cor:2linearconj}
Let $\Ac$ be an essential central hyperplane arrangement with Orlik-Solo\-mon ideal $J$ and Orlik-Solo\-mon algebra $A=E/J$ such that $J_{\langle 2 \rangle}$ has a $2$-linear resolution. Then Conjecture \ref{conB} is true for $E/J$.
\end{cor}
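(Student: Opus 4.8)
The plan is to reduce the statement to the explicit descriptions of $R^1(A)$ and of $J_{\langle 2\rangle}$ produced inside the proof of Theorem \ref{thmirre2}, and then to check the formula of Conjecture \ref{conB} by a direct computation of graded Betti numbers. First, Orlik--Solomon ideals are pure ideals by \eqref{generators2}, and $E/J$ satisfies property (*) by \cite{LY} and \cite{F}; so the hypotheses of Conjecture \ref{conB} are fulfilled and it remains only to verify the displayed equality. Since $J_{\langle2\rangle}$ has a $2$-linear resolution, Theorem \ref{thmirre2} shows that $R^1(A)$ is irreducible. Re-examining that proof, there is then exactly one $X\in L_2(\Ac)$ lying on more than two hyperplanes, say $X=H_1\cap\dots\cap H_s$ with $s$ chosen maximal, and $J_{\langle2\rangle}=(e_2-e_1,\dots,e_s-e_1)^2$. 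Because $s$ is maximal, $X$ lies on exactly the hyperplanes $H_1,\dots,H_s$, so the (unique, local) component $L_X$, which equals $R^1(A)$ by irreducibility, has dimension $s-1$. Hence in the notation of Conjecture \ref{conB} we have $h_{s-1}=1$ and $h_r=0$ for $r\neq s-1$, so the right-hand side equals $i\binom{s+i-2}{i+1}$.

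For the left-hand side, from $0\to J\to E\to E/J\to0$ together with $\Tor^E_j(K,E)=0$ for $j\ge1$ one obtains $\beta^E_{i,i+1}(E/J)=\beta^E_{i-1,i+1}(J)=\beta^E_{i-1,(i-1)+2}(J)$ for all $i\ge1$, so it suffices to compute the $2$-linear strand of $J$. Now $J$ and $J_{\langle2\rangle}$ have the same degree-$2$ component, and $J/J_{\langle2\rangle}$ has initial degree at least $3$; applying the long exact $\Tor$ sequence to $0\to J_{\langle2\rangle}\to J\to J/J_{\langle2\rangle}\to0$ and using that $\Tor^E_j(K,N)_m=0$ whenever $m<j+\indeg N$ yields $\beta^E_{k,k+2}(J)=\beta^E_{k,k+2}(J_{\langle2\rangle})$ for every $k\ge0$. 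By Lemma \ref{lem:linearforms} and the change of coordinates used there, $J_{\langle2\rangle}\cong(e_1,\dots,e_{s-1})^2$; hence by Lemma \ref{betti} (equivalently, by the formula $\beta^E_{k,k+2}((e_1,\dots,e_t)^2)=(k+1)\binom{t+k}{k+2}$ recorded inside the proof of Theorem \ref{thmirre2}), $\beta^E_{k,k+2}(J_{\langle2\rangle})=(k+1)\binom{s-1+k}{k+2}$. Setting $k=i-1$ gives $\beta^E_{i,i+1}(E/J)=i\binom{s+i-2}{i+1}$, which matches the right-hand side computed above; in fact equality holds for all $i\ge1$, not merely for $i\gg0$.

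The only point requiring care is the bookkeeping of the degree shifts relating $E/J$, $J$, $J_{\langle2\rangle}$ and $(e_1,\dots,e_{s-1})^2$, together with the (routine) observation that the $2$-linear strand of $J$ is unaffected by the generators of $J$ of degree $\ge3$; beyond this there is no real obstacle, since the needed structural input — irreducibility of $R^1(A)$ and the precise shape of $J_{\langle2\rangle}$ — is exactly what Theorem \ref{thmirre2} provides. (If $J_2=0$ the statement is trivial: then $J_{\langle2\rangle}=0$, $R^1(A)=\{0\}$, and both sides of the formula vanish.)
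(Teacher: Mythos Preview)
Your argument is correct and follows essentially the same strategy as the paper's own proof: invoke Theorem \ref{thmirre2} to get irreducibility of $R^1(A)$, read off its dimension, and match it against the explicit Betti numbers of $J_{\langle 2\rangle}\cong(e_1,\dots,e_{d})^2$. The only cosmetic difference is that the paper parametrizes the dimension of $R^1(A)$ by $t=\max\{\max(u):u\in G(\gin(J_{\langle 2\rangle}))\}$ coming from the (ii)$\Rightarrow$(i) half of Theorem \ref{thmirre2}, whereas you parametrize it by $s-1$ via the local component $L_X$ from the (i)$\Rightarrow$(ii) half; your route is slightly more explicit (and yields the formula for all $i\ge 1$, not just $i\gg 0$), and you spell out the identification $\beta^E_{i,i+2}(J)=\beta^E_{i,i+2}(J_{\langle 2\rangle})$ that the paper uses tacitly.
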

\begin{proof}
Let $t=\max\{\max(u): u\in G(\gin( J_{\langle 2\rangle}))\}$. In the proof of Theorem \ref{thmirre2} we showed that
$$
\beta^E_{i,i+2}(J)=\beta^E_{i,i+2}(J_{\langle 2\rangle})=(i+1)\binom{t+i}{i+2} \text{ for } i\gg 0.
$$
We know also that $h_r=1$ for $r=\dim R^1(E/J)=t$ and $h_r=0$ for $r\neq t$ since $R^1(E/J)$ is irreducible. This concludes the proof.
\end{proof}

If the Orlik-Solomon ideal has a linear resolution, then we can prove analogously to Corollary \ref{cor:maximal}:

\begin{prop}
\label{mlr}
Let $\Ac$ be an essential central hyperplane arrangement with Orlik-Solomon ideal $J$ and Orlik-Solomon algebra $A=E/J$
such that $J$ has a $d$-linear
resolution. Then
$$
R^{p}(A) = 0 \text{ for } 0\leq p \leq d-2 \text{ and } R^{d-1}(A) = V_E(A) \text{ is irreducible}.
$$
\end{prop}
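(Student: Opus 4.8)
The plan is to treat the two statements separately: the first is a degree count, and the second follows the pattern of Corollary \ref{cor:maximal}. Since $J$ has a $d$-linear resolution it is generated in degree $d$, so $J_j=0$ for $j<d$; in particular $d\ge 2$ (because $J_1=0$) and $A_j=E_j$ for $j\le d-1$. For $1\le p\le d-2$ the formula for $R^p(A)$ would need some $v\in E_p$ with $0\ne u\wedge v\in J_{p+1}$, which is impossible since $p+1<d$ gives $J_{p+1}=0$; and $H^0(A,u)=\{a\in K:au=0\text{ in }E_1\}=0$ once $u\ne 0$. Hence $R^p(A)=\{0\}$ for $0\le p\le d-2$.

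For $R^{d-1}(A)=V_E(A)$, I would first rewrite the variety. Since $A_{d-1}=E_{d-1}$, $A_{d-2}=E_{d-2}$ and $\ker(\cdot u\colon E_{d-1}\to E_d)=uE_{d-2}$, the defining formula yields
$$
R^{d-1}(A)=\{\,u\in E_1:\ u=0\ \text{or}\ u\wedge w=0\ \text{for some}\ 0\ne w\in J_d\,\}.
$$
This is a closed subvariety of $V_E(A)$, and by Section \ref{sect Prelimi} the latter is a linear — hence irreducible — subspace of $E_1$ of dimension $\cx_E(E/J)$. So it is enough to prove $\dim R^{d-1}(A)\ge\cx_E(E/J)$, after which $R^{d-1}(A)=V_E(A)$ follows at once; moreover $R^{d-1}(A)\subseteq R^p(A)\subseteq V_E(A)$ for $d-1\le p\le\rank\Ac$ then shows that all the remaining higher resonance varieties coincide with $V_E(A)$ as well.

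For the dimension bound I would pass to the ideal $J$. The short exact sequence of cochain complexes $0\to(J,u)\to(E,u)\to(A,u)\to 0$, together with the exactness of $(E,u)$ for $u\ne 0$ (indeed $\ker(\cdot u\colon E_j\to E_{j+1})=uE_{j-1}=\operatorname{im}(\cdot u\colon E_{j-1}\to E_j)$), gives $H^p(A,u)\cong H^{p+1}(J,u)$ for all $p$; hence $V_E(A)=V_E(J)$ and $u\in R^{d-1}(A)$ iff $H^d(J,u)=\ker(\cdot u\colon J_d\to J_{d+1})\ne 0$ (using $J_{d-1}=0$). It therefore suffices to show that the rank variety of $J$ is detected in the generating degree, i.e. $V_E(J)=\{u:u=0\text{ or }\ker(\cdot u\colon J_d\to J_{d+1})\ne 0\}$ (the inclusion $\supseteq$ being clear). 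Here I would combine the $d$-linearity of the resolution — which makes $\Tor^E_\bullet(K,J)$ a module over $S=\Tor^E_\bullet(K,K)$ generated in degree $0$ by $\Tor^E_0(K,J)\cong J_d$, with $\cx_E(J)=\dim_S\Tor^E_\bullet(K,J)$ and, via BGG, $V_E(J)=\Supp_S\Tor^E_\bullet(K,J)$ (\cite{AAH}) — with the Orlik–Solomon structure, exploiting that the degree-$d$ generators $\partial e_F$ are products of (linearly dependent) linear forms. Concretely this should reduce to the strongly stable situation: $J$ is componentwise linear, so $\gin(J)$ is again generated in degree $d$ with a $d$-linear resolution, and Corollary \ref{cor:maximal} and Lemma \ref{prohire} apply to $\gin(J)$; one then transfers the conclusion back to $J$, controlling the behaviour of $R^{d-1}$ under the Gröbner degeneration.

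The main obstacle is precisely this last equality $V_E(J)=\{u:H^d(J,u)\ne 0\}\cup\{0\}$: $d$-linearity of the resolution is not by itself enough (a general $E$-module with a linear resolution can have a rank variety strictly larger than the degeneracy locus of its degree-$d$ multiplication map $E_1\otimes J_d\to J_{d+1}$), so the argument must feed in the Orlik–Solomon structure, either through the explicit form of the $\partial e_F$ — as $J_{\langle 2\rangle}$ was identified with a square of an ideal of linear forms in Theorem \ref{thmirre2} — or through a careful comparison with the monomial ideal $\gin(J)$ and Corollary \ref{cor:maximal}. Everything else — the degree argument for small $p$, the rewriting of $R^{d-1}(A)$, and the passage from $A$ to $J$ — is routine bookkeeping with the formulas recalled in Section \ref{sect Prelimi}.
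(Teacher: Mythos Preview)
Your treatment of $R^p(A)=0$ for $0\le p\le d-2$ is correct and coincides with the paper's. Your reformulation of $R^{d-1}(A)$ via the short exact sequence $0\to(J,u)\to(E,u)\to(A,u)\to 0$ as the locus where $\ker(\cdot u\colon J_d\to J_{d+1})\ne 0$ is also correct and useful.

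The genuine gap is exactly the one you flag yourself: you never establish that this locus equals $V_E(A)$. Neither of your two proposed routes is carried through. The $\gin$ route is not just incomplete but problematic in principle: Corollary~\ref{cor:maximal} identifies $R^{d-1}(E/\gin(J))$ as a coordinate subspace \emph{after} the generic coordinate change, and there is no mechanism in the paper (or in general) for transporting a resonance variety back across a Gr\"obner degeneration --- Betti numbers transfer, but the varieties $R^{d-1}$ need not. The BGG/support sketch is closer in spirit, but as you observe, $d$-linearity alone does not force the rank variety to be detected in degree $d$, so the argument still needs a concrete Orlik--Solomon input that you do not supply.

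The paper provides precisely that input, and in a much more direct way than either of your suggestions. It invokes the classification \cite[Theorem~6.11(iii)]{GT}: an Orlik--Solomon ideal has a $d$-linear resolution if and only if the underlying matroid is $U_{d,n-f}\oplus U_{f,f}$. With formula~\eqref{formeleF} this gives $J=(e_2-e_1,\dots,e_{n-f}-e_1)^d$ explicitly, and then one reads off $R^{d-1}(A)=\spa_K\{e_i-e_1:2\le i\le n-f\}$ by hand (in your own reformulation: for any nonzero $u$ in this span, $u\wedge v\in J_d$ for a suitable $(d-1)$-fold product $v$ of the $e_i-e_1$, and $u\wedge(u\wedge v)=0$). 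On the other side, \cite[Corollary~6.7]{GT} gives $\cx_E(A)=n-f-1$ as $n$ minus the number of matroid components, hence $\dim_K V_E(A)=n-f-1=\dim_K R^{d-1}(A)$, and the inclusion $R^{d-1}(A)\subseteq V_E(A)$ finishes. So the ``explicit form of the $\partial e_F$'' route you mention is the right one, but it needs the matroid structure theorem from \cite{GT} to get started; once you have it, the homological machinery you set up becomes unnecessary.
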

\begin{proof}
Assume that $J$ has a $d$-linear free resolution. Then
$J_0=\ldots = J_{d-1}=\{\}$ and $J=(J_d)$. Therefore
$R^p(A)=0$ for $0\leq p\leq d-2$. By \cite[Theorem 6.11 (iii)]{GT}, the
matroid $M$ of $\Ac$ is $M= U_{d, n-f}\oplus U_{f,f}$ where $U_{p,q}$ is a uniform matroid of rank $p$ whose the ground set has $q$ elements and all subsets of $[q]$ of cardinality $\leq
p$ are independent. Therefore
\begin{eqnarray*}
J=J_{\langle d\rangle}
&=&(\partial e_F: F\in U_{d,n-f}, |F|=d+1)\\
&=& ((e_{i_2}-e_{i_1})\wedge\ldots\wedge (e_{i_{d+1}}-e_{i_1}) :
F=\{i_1,\dots,i_{d+1}\}\subseteq [n-f]
)\\
&=& ((e_{i_2}-e_{1})\wedge\ldots\wedge (e_{i_{d+1}}-e_{1}) :
F=\{1,i_2,\dots,i_{d+1}\}\subseteq [n-f]
).
\end{eqnarray*}
Note that we used at the third equation Formula \eqref{formeleF}. Thus
\begin{eqnarray*}
R^{d-1}(A)&=&
\{u\in E_1:  u=0 \text{ or there exists } v\in E_{d-1}, v\not\in u E_{d-2}, 0\neq
u\wedge v\in J_{d}\}\\
&=& \spa_K\{e_i-e_1: 2\leq i\leq n-f\}.
\end{eqnarray*}
This implies already that $R^{d-1}(A)$ is irreducible and $\dim_K R^{d-1}(A) = n-f-1$.

By \cite[Corollary 6.7]{GT} we have that $\cx_E(A)$ is equal to the
$n$ minus the number of components of the matroid of $\Ac$. Together with \cite[Theorem 3.2]{AAH} we get
$$
\dim_K V_E(A)= \cx_E(A)-f-1=\dim_K R^{d-1}(A).
$$
Since $R^{d-1}(A)\subseteq V_E(A)$, we conclude that $R^{d-1}(A)= V_E(A)$.
\end{proof}

\begin{que}
\
\begin{enumerate}
\item
We have some evidence that the converse of Proposition \ref{mlr} is true. So we ask assuming that $R^{p}(A) = 0$ for $0\leq p \leq d-2$  and $R^{d-1}(A) = V_E(A)$ is irreducible, if then $J$ has a $d$-linear resolution.
\item
Let $J\subset E$ be an arbitrary graded ideal with $d$-linear resolution.
Is $R^{d-1}(E/J)$ always maximal (i.e. $R^{d-1}(E/J)=V_E(E/J)$) or at least irreducible?
\end{enumerate}
\end{que}

\section{Betti numbers of the linear strand of edge ideals}
\label{sect con}

In the previous section we observed in Corollary \ref{cor:2linearconj} a special case where Conjecture \ref{conB} is true.
In this section we show that this conjecture  holds for a special class of edge ideals which gives some more support for the validity of Conjecture \ref{conB}.

In the following $G$ is always a graph on a finite vertex set $V_G$ and with edge set $E_G$. For a vertex $v\in V_G$  let $\deg v$ denote the number of edges incidents to $v$. Recall that a graph $G$  is a disjoint union of complete graphs if there exist complete graphs $G_i$ such that the vertex sets
$V_{G_i}$ of $G_i$ are disjoint, $|V_{G_i}|\geq 2$, the vertex set $V_G$ of $G$ is $V_G=\bigcup_i V_{G_i}$ and the edge set $E_G$ of $G$ is $E_G=\bigcup_i E_{G_i}$. We say that $G$ has no induced 4-cycle if for every $F\subseteq V_G$ with $|F| = 4$ the induced subgraph $G_F$ of $G$ on the vertex set $F$ is not a 4-cycle.

Let $n=|V_G|$ and $E$ be the exterior algebra on $n$ exterior variables $e_1,\dots,e_n$ over a field $K$. The edge ideal $J(G)$ of $G$ is defined as $J(G)=(e_i\wedge e_j: \{\ i,j\}\in E_G)$. Before proving our main result we need the following lemma:

\begin{lem}
\label{reunicom}
Let $G$ be a disjoint union of
complete graphs and $n=|V_G|$. Then $R^1(E/J(G))$ is a union of linear subspaces and $E/J(G)$ satisfies
property (*) (see Page 2).
\end{lem}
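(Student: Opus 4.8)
The plan is to determine $R^1(E/J(G))$ explicitly. Write $J = J(G)$, let $G_1,\dots,G_s$ be the complete components of $G$, and set $W_k = \spa_K\{e_i : i \in V_{G_k}\}$. The goal is to prove $R^1(E/J) = W_1 \cup \dots \cup W_s$; since each $W_k$ is a linear subspace, this gives the first assertion, and property (*) will then drop out. As $J = J(G)$ is generated in degree $2$ we have $J_1 = 0$, hence $A_1 = E_1$, and unwinding the cohomology of $(A,u)$ in degree $1$ (as recalled in the proof of Theorem \ref{thmstrong}) gives
$$R^1(E/J) = \{u \in E_1 : u = 0 \text{ or } \exists\, v \in E_1 \text{ with } 0 \neq u \wedge v \in J_2\}.$$
Here $J_2 = \spa_K\{e_i \wedge e_j : \{i,j\} \in E_G\}$; writing $u = \sum_i \alpha_i e_i$ and $v = \sum_j \beta_j e_j$, the condition $u \wedge v \in J_2$ amounts to $\alpha_i \beta_j - \alpha_j \beta_i = 0$ for every non-edge $\{i,j\}$ of $G$, and since each $G_k$ is complete these are precisely the pairs with $i$ and $j$ in distinct components.

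The inclusion $W_k \subseteq R^1(E/J)$ is easy: given $0 \neq u \in W_k$, use $|V_{G_k}| \geq 2$ to choose $v \in W_k$ linearly independent from $u$; then $u \wedge v$ is a $K$-linear combination of monomials $e_i \wedge e_j$ with $i, j \in V_{G_k}$, all edges of $G$, so $0 \neq u \wedge v \in J_2$ and $u \in R^1(E/J)$.

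For the reverse inclusion, take $u \in R^1(E/J)$ with $u \neq 0$ and $v \in E_1$ with $0 \neq u \wedge v \in J_2$ (so $u, v$ are linearly independent), and suppose for contradiction that $\supp(u)$ is not contained in any single $V_{G_k}$. Choose $i_0 \in \supp(u) \cap V_{G_a}$ and $i_1 \in \supp(u) \cap V_{G_b}$ with $a \neq b$; then the condition above forces $\beta_j = (\beta_{i_0}/\alpha_{i_0})\,\alpha_j$ for all $j \notin V_{G_a}$ and $\beta_j = (\beta_{i_1}/\alpha_{i_1})\,\alpha_j$ for all $j \notin V_{G_b}$. Evaluating the first relation at $j = i_1$ shows the two scalars coincide, say $\beta_{i_0}/\alpha_{i_0} = \beta_{i_1}/\alpha_{i_1} =: c$; and since $V_{G_a} \cap V_{G_b} = \emptyset$, every index $j$ lies outside at least one of $V_{G_a}$ and $V_{G_b}$, so $\beta_j = c\,\alpha_j$ for all $j$. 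Then $v = cu$ and hence $u \wedge v = 0$, a contradiction. Therefore $\supp(u) \subseteq V_{G_k}$ for some $k$, i.e.\ $u \in W_k$, and so $R^1(E/J) = W_1 \cup \dots \cup W_s$ is a union of linear subspaces; since the $V_{G_k}$ are disjoint and nonempty, the $W_k$ are pairwise incomparable, hence they are exactly the irreducible components of $R^1(E/J)$.

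Finally, property (*) follows at once. For $k \neq l$ the components meet in $W_k \cap W_l = \spa_K\{e_i : i \in V_{G_k} \cap V_{G_l}\} = \{0\}$. And if $u, v$ lie in the same component $W_k$, then $u \wedge v$ is a $K$-linear combination of monomials $e_i \wedge e_j$ with $i, j \in V_{G_k}$, each of which lies in $J(G)$ because $G_k$ is complete, so $u \wedge v \in J(G)$. The only step requiring real work is the reverse inclusion above --- the support argument showing that a nonzero element of $R^1(E/J)$ must be supported inside a single clique; the rest is bookkeeping with the explicit description of $J_2$.
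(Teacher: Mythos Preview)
Your proof is correct and follows essentially the same strategy as the paper: identify $R^1(E/J(G))$ explicitly as $\bigcup_k W_k$, with the nontrivial direction handled by a support argument showing that if $\supp(u)$ meets two distinct cliques then the non-edge constraints force $v$ to be a scalar multiple of $u$. Your version of the reverse inclusion is in fact a bit more direct than the paper's (you go straight to $v=cu$ rather than first establishing $\supp(u)=\supp(v)$), but the underlying idea is the same.
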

\begin{proof}
Let $G$ be the disjoint union of complete graphs  $G_1,\dots, G_t$.
Let $r_i=|V_{G_i}|$ and so $n=\sum_{i=1}^t r_i$. Consider the edge ideals $J(G)$ and $J(G_i)$ in the exterior algebra
$E$. It is clear that $J(G)= \sum_{i=1}^{t}
J(G_i)$. The first resonance variety of $E/J(G)$ can be computed as
\begin{equation}
\label{firstreso}
R^1(E/J(G))= \{u \in E_1: u=0 \text{ or there exists } v\in E_1\text{ such that } 0\neq u\wedge v \in J(G)\}.
\end{equation}
Let $V_{G_i}=\{ i_j:
j=1,\ldots,r_i\}\subseteq [n].$
Because of $e_{i_p}\wedge e_{i_q}\in J(G_i)$ for
$1\leq p,q\leq r_i$ and Equation
(\ref{firstreso}) we have
$$
\spa_K\{e_{i_1},\ldots,
e_{i_{r_i}}\}\subseteq R^1(E/J(G)).
$$
We claim that the irreducible components of $R^1(E/J(G))$ are
exactly those vector spaces $\spa_K\{e_{i_1},\ldots, e_{i_{r_i}}\}$ for $1\leq i\leq t$. Assume that there exists an irreducible component which is not of this form. Then there exists linear forms $u, v\in E_1$ such that
$$
0\neq u\wedge v\in J(G) \text{ and } u\notin \spa_K\{e_{i_1},\ldots,
e_{i_{r_i}}\} \text{ for all } 1\leq i\leq t.
$$
Let $u=\sum_{k=1}^n \alpha_k e_{k}$ and $v=\sum_{k=1}^n \beta_k e_k$ for $\alpha_k,\beta_k \in K$.

Now we show that $\supp(u)=\supp(v)$. For this let $k_1 \in \supp(v)$ be arbitrary and choose $i$ such that $v\in V_{G_i}$. Since $\supp(u)$ is not contained in  $V_{G_i}$ there exists $k_2 \in \supp(u)$ with $k_2 \not\in V_{G_i}$.  Observe that $e_p\wedge e_q\in J(G)=\sum_{i=1}^t J(G_i)$ if and only if there is $1\leq i\leq t$ such that $p,q\in V_{G_j}$ for some $j$. So $e_{k_1}\wedge e_{k_2}\notin J(G)$. It follows that
$\alpha_{k_1}\beta_{k_2}-\alpha_{k_2}\beta_{k_1}=0$ because $u\wedge v \in J(G)$. Hence
$$
 k_1\in \supp(u),\ k_2\in \supp(v) \text{ and } \alpha_{k_2}/\beta_{k_2}=
\alpha_{k_1}/\beta_{k_1}.
$$
In particular, we see that $\supp(v)\subseteq \supp(u)$ and also
the support of $v$ is not contained on one of the $V_{G_j}$. With the same arguments it follows now that $\supp(u)\subseteq \supp(v)$
and then $\supp(u)=\supp(v)$. Moreover, we can conclude that $\alpha_k/\beta_k$ is the same constant for every $k\in \supp(u)=\supp(v)$. But then we get the contradiction
$u\wedge v=0$.

So wee see that all irreducible components of $R^1(E/J(G))$ are induced by the complete subgraphs of $G$. More precisely,
$$
R^1(E/J(G))=\bigcup_{i=1}^t \spa_K\{e_{i_1},\ldots, e_{i_{r_i}}\}.
$$
We also get that $R^1(E/J(G))$ satisfies property(*) in Page 2.
\end{proof}

\begin{lem}
\label{lemhelper}
Let $i, r$ be integers with $i,r\geq 0$. Then we have
$$
\sum_{j=0}^i \binom{i}{j}\binom{r}{j+2}=\binom{r+i}{i+2}.
$$
\end{lem}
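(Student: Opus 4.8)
The plan is to reduce this identity to the classical Vandermonde convolution
$\sum_{l}\binom{a}{l}\binom{b}{c-l}=\binom{a+b}{c}$, which holds for all non-negative integers $a,b,c$ under the usual convention that $\binom{x}{m}=0$ whenever $m<0$ or $m>x$.

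First I would rewrite the left-hand side using the symmetry $\binom{i}{j}=\binom{i}{i-j}$, obtaining $\sum_{j=0}^{i}\binom{i}{j}\binom{r}{j+2}=\sum_{j=0}^{i}\binom{i}{i-j}\binom{r}{j+2}$. Then I would perform the substitution $l=i-j$: as $j$ ranges from $0$ to $i$ the new index $l$ ranges over the same set, and $j+2=(i+2)-l$. This converts the sum into $\sum_{l=0}^{i}\binom{i}{l}\binom{r}{(i+2)-l}$. Since the summand already vanishes for $l<0$ and for $l>i+2$ (neither of which occurs in the displayed range anyway), no term is lost or gained by regarding this as the full convolution $\sum_{l}\binom{i}{l}\binom{r}{(i+2)-l}$.

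Applying Vandermonde's identity with $a=i$, $b=r$, and $c=i+2$ then yields $\sum_{l}\binom{i}{l}\binom{r}{(i+2)-l}=\binom{i+r}{i+2}$, which is exactly the claimed formula.

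I do not expect any genuine obstacle here; the only points that merit a word of care are the bookkeeping of the index range under the substitution and the vanishing of out-of-range binomial coefficients. As an alternative one could compare the coefficient of $x^{i+2}$ on the two sides of $(1+x)^{i}(1+x)^{r}=(1+x)^{i+r}$, which after the same reindexing produces precisely this identity.
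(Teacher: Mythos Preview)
Your proof is correct and is essentially the same as the paper's: both amount to Vandermonde's convolution. The paper carries it out via your stated alternative, comparing the coefficient of $x^{r-2}$ on both sides of $(1+x)^{i}(1+x)^{r}=(1+x)^{r+i}$, which after the symmetry $\binom{r}{r-2-j}=\binom{r}{j+2}$ gives exactly the displayed sum.
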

\begin{proof}
Considering the polynomial $f(x)=(1+x)^{r+i}$ in the polynomial ring $K[x]$, we get
$$ (1+x)^{r+i}=(1+x)^i (1+x)^r=(\sum_{j=0}^i \binom{i}{j} x^j) (\sum_{t=0}^r \binom{r}{t} x^t).
$$
This implies that the coefficient of $x^{r-2}$ is $\sum_{j=0}^i \binom{i}{j} \binom{r}{r-2-j}=\sum_{j=0}^i \binom{i}{j} \binom{r}{j+2}$.
Moreover $(1+x)^{r+i}=\sum_{j=0}^{r+i} \binom{r+i}{j}x^j$, so the coefficient of $x^{r-2}$ in this equality is $\binom{r+i}{r-2}=\binom{r+i}{i+2}$. Hence we conclude that $\sum_{j=0}^i \binom{i}{j}\binom{r}{j+2}=\binom{r+i}{i+2}.$

\end{proof}
We are ready to prove the main results of this section.
\begin{thm}
\label{thmconB}
Let $G$ be a disjoint union of complete graphs  with $n=|V_G|$. Then the graded Betti numbers in the linear strand of $J(G)$ are given by
$$
\beta^E_{i,i+2}(J(G))=(i+1)\sum_{r=2}^n h_r \binom{r+ i}{i+2},
$$
where $h_r$ is the number of $r$-dimensional components of $R^1(E/J)$ in the affine space $K^n$.
\end{thm}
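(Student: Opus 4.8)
The plan is to reduce everything to the blockwise structure by a K\"unneth argument. Write $G=G_1\sqcup\dots\sqcup G_t$ with each $G_i$ complete on $r_i=|V_{G_i}|\ge 2$ vertices, and, after relabelling so that the blocks occupy disjoint consecutive sets of indices, put $E_i=K\langle e_j:j\in V_{G_i}\rangle$ and $\mathfrak m_i=(e_j:j\in V_{G_i})\subseteq E_i$. Because $G_i$ is complete we have $J(G_i)=\mathfrak m_i^2$, and (as already observed in the proof of Lemma \ref{reunicom}) $J(G)=\sum_{i=1}^t\mathfrak m_i^2 E$. Since $E$ is the graded tensor product $E_1\otimes_K\dots\otimes_K E_t$ and a tensor product of $K$-algebras modulo ideals coming from the separate factors is the tensor product of the quotients, the first step is the graded isomorphism
\[
E/J(G)\;\cong\;\bigotimes_{i=1}^{t} E_i/\mathfrak m_i^2 .
\]

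Next I would record the Betti numbers of the factors. Each $\mathfrak m_i^2$ is a stable monomial ideal of $E_i$ generated in degree $2$, hence has a $2$-linear resolution, and by Lemma \ref{betti} (equivalently \cite[Proposition 6.12]{GT}) its linear strand is $\beta^{E_i}_{k,k+2}(\mathfrak m_i^2)=(k+1)\binom{r_i+k}{k+2}$ for all $k\ge 0$. Consequently $E_i/\mathfrak m_i^2$ has $\beta^{E_i}_{0,0}=1$, has $\beta^{E_i}_{k,k+1}(E_i/\mathfrak m_i^2)=\beta^{E_i}_{k-1,k+1}(\mathfrak m_i^2)$ for $k\ge 1$, and has all other graded Betti numbers equal to $0$; in particular its only nonzero bidegrees $(k,j)$ are $(0,0)$ and those with $j=k+1$. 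I then invoke the K\"unneth formula for $\Tor$ over a tensor product of $K$-algebras: the $K$-tensor product of minimal graded free $E_i$-resolutions of the $E_i/\mathfrak m_i^2$ is a graded free $E$-resolution of $E/J(G)$, and is again minimal because its differential, assembled from the differentials of the minimal $E_i$-resolutions (whose entries lie in $\mathfrak m_i\subseteq\mathfrak m_E$), has all entries in $\mathfrak m_E$. Therefore
\[
\beta^E_{m,j}\bigl(E/J(G)\bigr)=\sum_{\substack{m_1+\dots+m_t=m\\ j_1+\dots+j_t=j}}\ \prod_{i=1}^{t}\beta^{E_i}_{m_i,j_i}\bigl(E_i/\mathfrak m_i^2\bigr).
\]

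To finish I would extract the linear strand. As $J(G)$ is generated in degree $2$ we have $\beta^E_{i,i+2}(J(G))=\beta^E_{i+1,i+2}(E/J(G))$, and in the sum above with $(m,j)=(i+1,i+2)$ one has $\sum_\ell(j_\ell-m_\ell)=1$ while every summand satisfies $j_\ell-m_\ell\in\{0,1\}$ with $j_\ell-m_\ell=0$ forcing $m_\ell=0$. Hence exactly one index $\ell$ can have $m_\ell\ge 1$; for that index $(m_\ell,j_\ell)=(i+1,i+2)$ and each remaining factor is $\beta^{E_{\ell'}}_{0,0}=1$, so
\[
\beta^E_{i,i+2}\bigl(J(G)\bigr)=\sum_{\ell=1}^{t}\beta^{E_\ell}_{i+1,i+2}\bigl(E_\ell/\mathfrak m_\ell^2\bigr)=\sum_{\ell=1}^{t}(i+1)\binom{r_\ell+i}{i+2}.
\]
By Lemma \ref{reunicom} the irreducible components of $R^1(E/J(G))$ are exactly the coordinate subspaces $\spa_K\{e_j:j\in V_{G_\ell}\}$ for $1\le\ell\le t$, so the number of $r$-dimensional components is $h_r=\#\{\ell:r_\ell=r\}$; collecting equal values of $r_\ell$ in the last sum yields $\beta^E_{i,i+2}(J(G))=(i+1)\sum_{r=2}^{n}h_r\binom{r+i}{i+2}$, which is the claim (and, incidentally, for every $i\ge 0$, not merely for $i\gg 0$).

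The step that needs care is the reduction in the first two paragraphs: identifying $E/J(G)$ with $\bigotimes_i E_i/\mathfrak m_i^2$ as graded algebras, with the correct Koszul signs for the tensor product of exterior algebras, and verifying that a tensor product of minimal resolutions is again minimal over $E$. Everything afterwards is binomial bookkeeping; if one would rather compute directly with the explicit generators of $J(G)$ instead of invoking K\"unneth, Lemma \ref{lemhelper} supplies precisely the identity needed to put the answer in the stated closed form, and a short-exact-sequence induction on $t$ (still using disjointness of supports) gives a K\"unneth-free alternative.
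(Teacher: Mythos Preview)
Your proof is correct and takes a genuinely different route from the paper. The paper first computes the linear strand of the edge ideal $I(G)$ over the \emph{polynomial} ring $S$ using the Roth--Van~Tuyl formula \cite[Proposition 2.4]{RT} for graphs without induced $4$-cycles, then transfers to the exterior algebra via R\"omer's comparison $\beta^E_{i,i+2}(J(G))=\sum_{j}\binom{i+1}{j+1}\beta^S_{j,j+2}(I(G))$ from \cite{RO}, and finally invokes Lemma \ref{lemhelper} to collapse the resulting double sum. Your argument instead exploits the block structure directly in the exterior setting: the K\"unneth decomposition $E/J(G)\cong\bigotimes_i E_i/\mathfrak m_i^2$ reduces everything to the known linear Betti numbers of $\mathfrak m_i^2$, and the extraction of the $(i+1,i+2)$ strand is immediate because each factor contributes only in bidegrees $(0,0)$ or $(k,k+1)$. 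Your approach is more self-contained (no detour through the polynomial ring, no need for \cite{RT} or \cite{RO}, and Lemma \ref{lemhelper} becomes optional), while the paper's route has the virtue of linking to the well-studied polynomial edge-ideal theory. The caveat you flag---that the tensor product of minimal resolutions over the $E_i$ is again minimal over $E$---is handled exactly as you say: every differential entry lies in some $\mathfrak m_i\subseteq\mathfrak m_E$, so minimality survives; the sign conventions cause no trouble since they do not affect the graded ranks of the free modules.
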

So Conjecture \ref{conB} is true for edge ideals of disjoint unions of complete graphs.
\begin{proof}
Let $G$ be the disjoint union of complete graphs
$G_1,\dots,G_t$ with $V_{G_i}=\{i_1,\ldots,i_{r_i}\}$, $i=1,\ldots,t$. By Lemma \ref{reunicom} we see that $R^1(E/J(G))=\bigcup_{i=1}^t
\spa_K\{e_{i_1},\ldots, e_{i_{r_i}}\}$.

Let $k_{i}(G)$ be  the number of complete subgraph on $i$ vertices of $G$. Observe that a disjoint union of complete graphs has no induced 4-cycles. It follows from \cite[Proposition 2.4]{RT}
\begin{eqnarray*}
\beta_{i,i+2}^S(I(G))&=&\sum_{v\in V_G} \binom{\deg v}{i+1}- k_{i+2}(G)=\sum_{j=1}^t\sum_{v\in V_{G_j}}\binom{\deg v}{i+1}-k_{i+2}(G)\\
        &=& \sum_{j=1}^t\sum_{v\in V_{G_j}} \binom{r_j-1}{i+1}-\sum_{j=1}^t k_{i+2}(G_j)=\sum_{j=1}^t r_j \binom{r_j-1}{i+1} -\sum_{j=1}^t \binom{r_j}{i+2}\\
        &=&\sum_{r=1}^t r \cdot h_r\binom{r-1}{i+1}-\sum_{r=1}^t h_r\binom{r}{i+2}=(i+1)\sum_{r=1}^t h_r \binom{r}{i+2}
\end{eqnarray*}
Here $S=K[x_1,\dots,x_n]$ is the polynomial ring over $K$, the ideal $I(G)=(x_i x_j: \{i,j\}\in E_G)$ is the edge ideal of $G$ over $S$
and $\beta_{i,j}^S(I(G))$ denote the graded Betti numbers of  $I(G)$ over $S$. Note that $I(G)$ is a so-called squarefree $S$-module in the sense of \cite[Definition 2.1]{YA}. Then it follows from \cite[Corollary 1.3]{RO}
that
\begin{eqnarray*}
\beta_{i,i+2}^{E}(J(G))&=& \sum_{j=0}^i
\binom{i+1}{j+1}\beta_{j,j+2}^S(I(G))=\sum_{j=0}^i \binom{i+1}{j+1} (j+1)\sum_{r=1}^t h_r \binom{r}{j+2}\\
        &=& (i+1)\sum_{j=0}^i \binom{i}{j}h_r\sum_{r=1}^t \binom{r}{j+2}=(i+1)\sum_{r=1}^t h_r\sum_{j=0}^i \binom{i}{j}\binom{r}{j+2}\\
        &=& (i+1)\sum_{r=1}^t h_r \binom{r+i}{i+2},
\end{eqnarray*}
where we get the last equality from Lemma \ref{lemhelper}. Since $\beta_{i,i+2}^{E}(J(G))=\beta_{i+1,i+2}^{E} (E/J(G))$  Conjecture \ref{conB} holds for $E/J(G)$.
\end{proof}

Conjecture \ref{conB} without property (*) is not true for an algebra $E/J$ where $J\subset E$ is an arbitrary (monomial) ideal as can be seen as follows:

\begin{exa}
\label{exconB}
Let $E=K\langle e_1,\dots,e_5\rangle$ be the exterior algebra over a field $K$. Let
$$
J=(e_1\wedge e_2, e_1\wedge e_3, e_1\wedge e_4, e_1\wedge e_5,e_2\wedge e_3\wedge e_4) \subset E.
$$
We see that $J$ is a strongly stable monomial ideal and $E/J$ does not have the property (*). By Lemma \ref{betti}, for $i\geq 0$, we have
$$
\beta^E_{i,i+2}(J)=\sum_{u\in
G(J)_2}\binom{\max(u)+i-1}{\max(u)-1}=\binom{i+1}{1}+\binom{i+2}{2}+\binom{i+3}{3}+\binom{i+4}{4}.
$$
Since $\max\{\max(u): u\in G(J)_2\}=5$, it follows from Theorem \ref{thmstrong} that
$$
R^1(E/J)=\spa_K\{e_1,\ldots,e_5\} \text{ and }
R^1(E/J) \text{ is irreducible}.
$$
By induction, we can prove
that
$$
\binom{i+1}{1}+\binom{i+2}{2}+\binom{i+3}{3}+\binom{i+4}{4}<(i+1)\binom{i+5}{i+2}
$$
for $i\geq 0$. We get
$$
\beta^E_{i+1,i+2}(E/J)=\beta^E_{i,i+2}(J)<
(i+1)\binom{i+5}{i+2}= (i+1)\sum_{r\geq 1} h_r \binom{r+i}{i+2}
$$
where $h_r$ is the number of components of $R^1(E/J)$ which have
dimension $r$ in the affine space $E_1= K^n$.
Thus Conjecture \ref{conB} does not hold for $E/J$.
\end{exa}
%%%%%%%%%%%%%%%%%%%%%%%%%%%%%%%%%%%%%%%%%%%%%%%%%%%%%%%%%%%
%The computations for this work were done primarily with the algebraic geometry and commutative algebra system \emph{Macaulay~2}, by Grayson and Stillman \cite{GS}.

\end{document}